\documentclass[12pt]{article}
\usepackage{graphicx} 

\usepackage{amssymb}
\usepackage{amsmath,amsthm}
\usepackage[english]{babel}
\usepackage[T1]{fontenc}
\usepackage[a4paper,top=2.5cm,bottom=3cm,left=2.5cm,right=2.5cm,marginparwidth=1.75cm]{geometry}

\usepackage{microtype}

\usepackage{enumitem} %

\usepackage{mathtools}
\usepackage{graphicx,adjustbox}
\usepackage[dvipsnames]{xcolor}
\usepackage{bbm}
\usepackage[colorlinks=true, allcolors=black]{hyperref}
\usepackage[normalem]{ulem}
\usepackage{cancel}
\usepackage{varwidth}  %
\usepackage{booktabs}

\usepackage{xparse}
\NewDocumentCommand{\grad}{e{_^}}{%
  \mathop{}\!%
  \nabla
  \IfValueT{#1}{_{\!#1}}%
  \IfValueT{#2}{^{#2}}%
}

\usepackage{xspace}

\usepackage{dsfont}

\newtheorem{theorem}{Theorem}[section]

\newtheorem{lemma}[theorem]{Lemma}
\newtheorem{proposition}[theorem]{Proposition}

\theoremstyle{definition}
\newtheorem{definition}[theorem]{Definition}
\usepackage{textcomp}
\newtheorem{remarkx}[theorem]{Remark}
\newtheorem{examplex}[theorem]{Example}
\newenvironment{remark}
{\pushQED{\qed}\remarkx} %
{\popQED\endremarkx}
{\popQED\endexamplex}

\DeclareMathAlphabet{\mathup}{OT1}{\familydefault}{m}{n}

\usepackage{ifthen}
\newlength{\leftstackrelawd}
\newlength{\leftstackrelbwd}
\def\leftstackrel#1#2{\settowidth{\leftstackrelawd}%
{${{}^{#1}}$}\settowidth{\leftstackrelbwd}{$#2$}%
\addtolength{\leftstackrelawd}{-\leftstackrelbwd}%
\leavevmode\ifthenelse{\lengthtest{\leftstackrelawd>0pt}}%
{\kern-.5\leftstackrelawd}{}\mathrel{\mathop{#2}\limits^{#1}}}

 \def\calB{{\mathcal B}} 
  \def\calF{{\mathcal F}}

\def\calM{{\mathcal M}}

  \def\calX{{\mathcal X}}

\def\rmd{{\mathrm d}}

 \def\bbE{{\mathbb E}}

 \def\bbN{{\mathbb N}} 
\def\bbP{{\mathbb P}}  \def\bbR{{\mathbb R}}
\def\bbS{{\mathbb S}}  
  
 \def\bbZ{{\mathbb Z}}

\usepackage{subcaption}
\DeclareMathOperator{\Sym}{Sym}
\DeclareMathOperator{\law}{law}
\DeclareMathOperator{\dist}{dist}
\DeclareMathOperator{\vol}{vol}

\title{Random Quadratic Form with random forcing: \\ Metastable synchronization by noise}
\author{Anna Shalova
\\
\normalsize Korteweg-de Vries Institute for Mathematics, University of Amsterdam, \\ \normalsize \href{mailto:a.shalova@uva.nl}{a.shalova@uva.nl}
}
\date{\today}

\begin{document}

\maketitle

\begin{abstract}

    We study the Random Quadratic Form (RQF) on a sphere in the presence of random Brownian forcing. We show that the forcing does not effectively change the law of the process but affects the synchronization properties of the system. While the RQF without forcing exhibits partial synchronization due to the intrinsic symmetries, the introduction of an arbitrarily small forcing results in long-term symmetry breaking and leads to full synchronization. 
    
    In this work we focus on the small forcing regime and recover the multiscale behavior of the two-point process. We show that in the first stage the model converges to an anti-polar configuration due to the symmetries of the RQF and in the second stage the two clusters meet due to the symmetry breaking phenomenon.

    The model is motivated by continuous-time machine learning models such as Neural ODEs and continuous-time formulations of transformers. In particular, the results of this work explain the role of the bias and the scale of its initialization.
\end{abstract}
\tableofcontents
\section{Introduction}
In this work we study the Random Quadratic Form (RQF) on a sphere $\bbS^{n-1}$ with Brownian forcing given by the following stochastic differential equation:
    \begin{equation}
    \label{eq:rqf}
    \rmd X_t  
    = P_{X_t}  \partial Q_t  X_t + \gamma P_{X_t}\partial W_t,
    \end{equation}
    where $P_{X}$ denotes the projection onto the tangent space of $\bbS^{n-1}$ at $X$
    \[
    P_{X} := P_{T_{X}\bbS^{n-1}} = I - XX^T,
    \]
    the noisy process $Q_t: (0, \infty) \times \Omega \to \Sym^n$ is given by 
\begin{equation*}
\label{eq:A}
Q_t =  \frac{1}{\sqrt 2}(B_t + B_t^T), 
\end{equation*}
where $\{B_t^{ij}: i, j \in 1\dots n\}$ are independent Brownian motions and $W_t:(0, \infty) \times \Omega \to \bbR^n$ is an $n$-dimensional Brownian motion independent of $Q_t$. We use  the notation $\partial Q_t, \partial W_t$ to specify that the equation is understood in the Stratonovich sense. 

The RQF model was introduced in \cite{engel2026random} as a stochastic counterpart of a gradient flow of a quadratic functional and only included the multiplicative noise, namely the case $\gamma = 0$ was considered. Analogously to the deterministic setting, the system was shown to exhibit clustering behaviour in the sense that any two solutions of Eq. \eqref{eq:rqf} with $\gamma=0$ driven by the same noisy process in the long-time limit become either aligned or anti-polar. At the same time, the one point motion of the system is a Brownian motion and has no preferred direction, so the nontrivial behavior only appears on the level of the two-point motion. Such a phenomenon is known as \emph{synchronization by noise} and we provide a rigorous formulation of synchronization by noise result for the RQF in Section \ref{sec:intro-main}. 

The anti-polar limiting configuration can be explained by the intrinsic symmetry of the RQF without forcing, which is violated in the presence of an arbitrarily small forcing $\gamma$. As a result of that, the system with any non-zero $\gamma$ in the long-time limit synchronizes to a single point, namely the anti-polar state is no longer stable. 

At the same time, in the small forcing regime $\gamma \downarrow 0$, the strong attraction to the symmetric configuration dominates on the time scale $t\sim \log\gamma^{-1}$ and before converging to the random attractor consisting of a single point, the system approaches the anti-polar configuration defined by the dynamics of the non-forced RQF. We call this phenomenon \emph{metastable synchronization} because both of the limiting configurations, namely stable and metastable ones, are synchronizing. In this work we characterize both of the attractive configurations and the corresponding rates of convergence.

Despite changing the symmetry properties of the system, the given forcing preserves the qualitative behaviour of the single-point process: for an arbitrary $\gamma$, the RQF is a (rescaled) spherical Brownian motion. This, in particular, implies that the effect of the forcing is also only noticeable at the level of the two-point process.

The rest of the paper is structured as follows. In the rest of the introduction we discuss the main driving application from machine learning, give a literature overview and conclude the section with a schematic statement of the main results. In Section \ref{sec:prelim} we give the necessary theoretical background on random dynamical systems and random attractors. We introduce and prove the main results in Section \ref{sec:main}. Finally, in Section \ref{sec:multi} we discuss how the results of this work can be extended to recover the metastable behaviour with multiple ($>2$) scales.
\begin{figure}[t]
    \centering
    \includegraphics[width=0.32\linewidth]{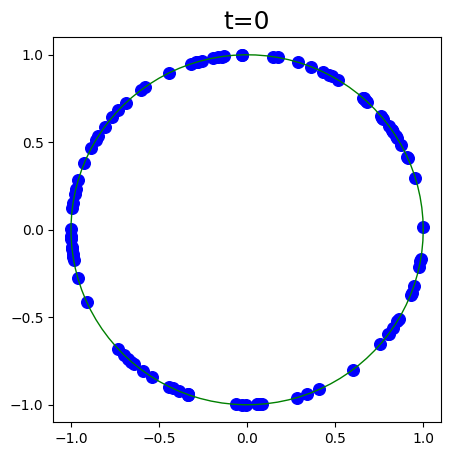}
    \includegraphics[width=0.32\linewidth]{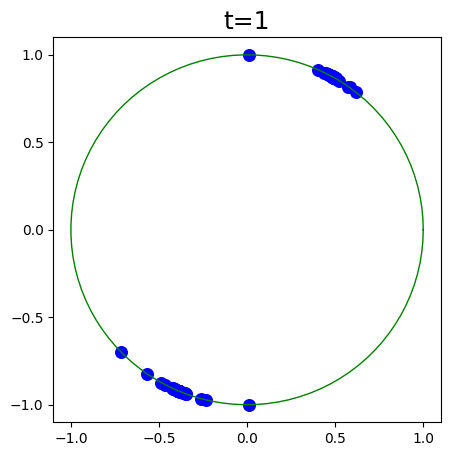}
    \includegraphics[width=0.32\linewidth]{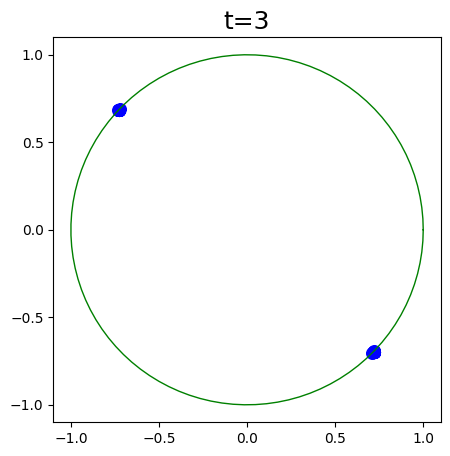}

    \includegraphics[width=0.32\linewidth]{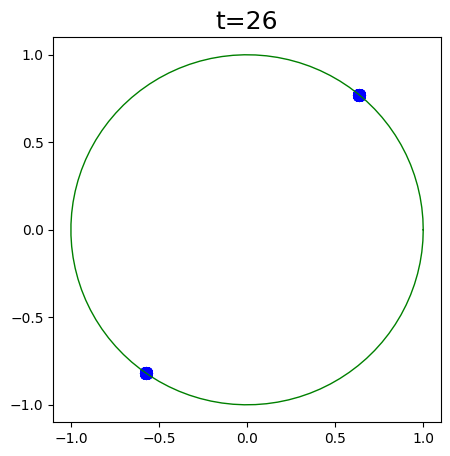}
    \includegraphics[width=0.32\linewidth]{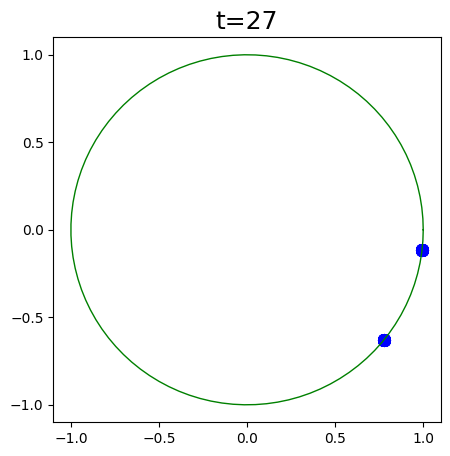}
    \includegraphics[width=0.32\linewidth]{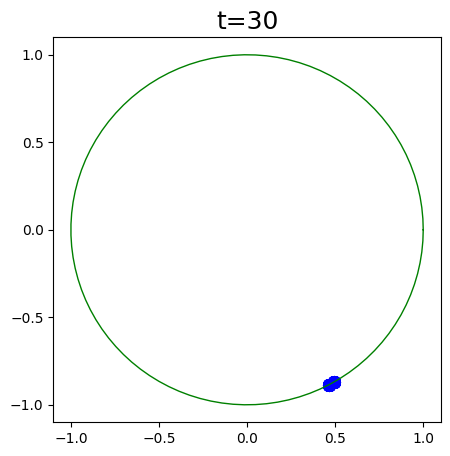}
    \caption{Ensemble of solutions of eq. \eqref{eq:rqf} with different initial conditions driven by the same realization of the noisy processes $Q_t$ and $W_t$ with the forcing parameter $\gamma = 0.1$. In the first stage (the first row), the particles cluster into an anti-polar configuration and follow the dynamics of the 'meta'-attractor $A^Q(\omega)$. In the second stage (the second row), the particles converge to the global random attractor $A(\omega)$ and continue moving as a single cluster.}
    \label{fig:intro}
\end{figure}
\subsection{Neural ODEs and Transformers}
\label{sec:intro-tranformers}
Neural ODEs, introduced in \cite{CRBD18}, are a class of neural networks in which the features $x(t)$ evolve continuously in
time according to an ordinary differential equation
\[
\dot x(t)  = f(\theta(t), x(t), t), \quad x_0\in X_{in},
\]
where $x_0$ is the input data defined on the input space $X_{in}$ and $\theta$ are the parameters of the neural network. This is in contrast to the classical feed-forward networks, in which the features evolution is defined by a discrete-time dynamical system
\begin{equation*}
\label{eq:nn}
x_{k+1}  = f(\theta_k, x_k), \quad x_0\in X_{in}.
\end{equation*}
Considering a specific parametrization of a Neural ODE, consisting of a single feed-forward layer with
a normalization step, the corresponding dynamics of features takes the form
\begin{equation}\label{eq:node}
    \dot x = P_x s\big(M(t)x + B(t)\big), \qquad x(0) = x_0 \in \mathbb{S}^{n-1},
\end{equation}
where $s$ is an activation function and the time-dependent parameters $M(t)$ and $B(t)$
are the weights and the biases of the linear layer. 

Define the cumulative weight and biases processes as 
\[
Q_t =\int_0^t M(s)\rmd s \quad \text{and} \quad W_t =\int_0^t B(s)\rmd s.
\]
Then, considering the linear activation function $s(x) = x$, we note that features in \eqref{eq:node} follow the RQF
dynamics as in Eq. \eqref{eq:rqf} and therefore the RQF can be understood as a simplified model of a Neural ODE model. To justify the white-noise structure of the driving processes we remark that
in discrete-time neural networks, the parameters $\theta_k$ of every layer $k$ are usually initialized randomly and independently from
layer to layer. Hence, we argue that the RQF driven by the diffusion processes $Q_t$ and $W_t$ as defined in \eqref{eq:rqf} is a natural continuous-time proxy for a neural ODE at initialization. The relative scale of the
weight and the bias initialization is then encoded in the single parameter $\gamma$, and,
as we show below, this scale determines the long-time behaviour of the system.

Moreover, recently introduced continuous-time models of transformers \cite{sander2022sinkformers, geshkovski2024mathematical}
can be understood as an extension of the Neural ODE framework and are specifically concerned with the joint dynamics of features corresponding to multiple inputs. In particular, the input of a continuous time transformer is a sequence of vectors $(x_i(0) \in X_{in})_{i= 1\dots N}$, which in language modeling problems correspond to different words in a text. In contrast to Neural ODEs, the dynamics of every feature vector $x^i(t)$ in transformers is additionally coupled to the states of all the other vectors $x^j(t)$ through the so-called Self-Attention mechanism and takes the form
\[
\begin{aligned}
\dot x_i &= P_{x_i}\left(\text{FF}(x_i) + \text{Attn}(x_i; x_1, x_2 \dots x_N)\right), \\
\text{FF}(x_i) &= s(Mx_i + B), \\
\text{Attn}(x_i; x_1, x_2 \dots x_N) &= \frac{1}{\sum_j e^{x_iQ^TKx_j}}\sum_j e^{x_iQ^TKx_j}Vx_j,
\end{aligned}
\]
see \cite{vaswani2017attention, geshkovski2024mathematical} for an extensive description of the architecture. Such a structure can
be interpreted as an interacting particle system, where the Feed-Forward layers act as an effective potential and Self-Attention layers describe the interaction between vectors. 

In the absence of the interaction force, the tokens $(x_i)_{1\le i\le d}$ are driven by the common noise coming from the shared parameters $M$ and  $B$ and thus follow the multipoint motion of the RQF with forcing. Recently, clustering and metastability of tokens in transformers have been extensively studied for the self-attention driven dynamics \cite{geshkovski2024emergence, geshkovski2024mathematical},  see Section~\ref{sec:intro-lit} for details. In this work we follow the approach of \cite{engel2026random} and provide
a counterpart of these results for the dynamics defined purely by the feed-forward layers. In particular, our results imply the existence of metastable clustering of the multi-point motion in Neural ODEs when the variance of the bias initialization is smaller than the variance of the weights initialization.
\subsection{Literature overview}
\label{sec:intro-lit}
\paragraph{Diffusions on a sphere.}
Diffusion processes on $\bbS^{n-1}$ naturally arise in the context of stability analysis of stochastic differential equations by means of the multiplicative ergodic theorem. In particular, the  Furstenberg–Khasminskii formula reduces the calculation of the leading Lyapunov exponent to the ergodic averaging of a specific functional over the ergodic measure of the projective process \cite{furstenberg1963noncommuting, khasminskii2011stochastic}, see also \cite[Chapter 6]{Arnold98} and \cite{imkeller2001some}. Therefore, synchronizing behaviour of diffusions on the sphere is closely related to the Lyapunov stability of SDEs in Euclidean spaces and has been studied in various formulations. Specifically, synchronization of diffusions on $\bbS^{n-1}$ has been established: for the canonical Brownian motion in \cite{baxendale1986asymptotic}, for general isotropic Brownian flows in   \cite{raimond1999flots, cranston2016weak} and for the RQF formulation in \cite{engel2026random}. We remark that for each of these models the one-point motion is a spherical
Brownian motion and therefore carries no distinctive information about the flow, which motivates the study of the two-point process.

We remark that the Euclidean counterpart of this picture is classical: the full Lyapunov spectrum for isotropic Brownian flows on $\mathbb{R}^d$ is computed in \cite{LeJ85}, see also \cite{BH86}. And the random attractor in the stable regime is shown to be a singleton \cite{LeJ85,DLJ88}. 

\paragraph{Synchronization by noise.}
Synchronization of diffusion processes is an example of a more general phenomenon which is known as synchronization by noise. For a general class of random dynamical systems it has been formalized in \cite{flandoli2017synchronization}, and we refer the reader to this work for an extensive literature review on the topic. The approaches allowing to establish synchronization by noise include multiplicative ergodic theorem \cite{arnold1983stabilization, baxendale1991statistical, caraballo2004stabilisation} and the Feller explosion test \cite{scheutzow2002comparison, cranston2016weak} which was also used for the RQF without forcing in \cite{engel2026random}. Alternatively, synchronization can be deduced from the order-preserving properties of the dynamics \cite{CrauelFlandoli98, FGS2}.
Local synchronization can also be established using large-deviations theory, see e.g. \cite{mahony1996gradient, tearne2008collapse}.

\paragraph{Metastability in particle systems.} Classically, metastability of a stochastic system refers to the phenomena observed in gradient-type systems when the system spends a large time near the local minima of the corresponding energy. In such a setting the metastability can be studied using a large deviations-based approach and, in particular, Eyring-Kramers asymptotics \cite{FW12,BG06,BdH15}. This work is concerned with a different setting, namely small perturbation of a \emph{stochastic} process but not of a deterministic flow. At the same time, our result can be understood as an analog of the classical metastability phenomenon but on the level of the \emph{two-point motion}.  

Since we characterize the relative convergence of multi-point motion, another related phenomenon is the transient clustering in interacting
particle systems where the fast scale corresponds to the clusters formation. In particular, on a finite-particle level, the metastable clustering of interacting diffusions and the corresponding Lyapunov exponents are studied in \cite{adams2026separation}. In a mean-field setting, the cluster formation and merging dynamics is discussed in \cite{gerber2026formation, wehlitz2026energetic}. We also remark that the mean-field formulation is a special case of the dynamic metastability framework \cite{otto2007slow}. However, the clustering mechanism studied in these works is of a different nature and is caused by the pairwise interactions. Note that interactions are not present in our setting.

\paragraph{Clustering and metastability in transformers.}
The continuous-time models of transformers can be interpreted as interacting particle systems \cite{sander2022sinkformers, geshkovski2024emergence, geshkovski2024mathematical} and have been shown to exhibit (metastable) clustering dynamics. In particular, the long-time clustering of tokens has been established in various settings, see \cite{rigollet2025mean} for an overview
of the results and recent works \cite{burger2025analysis, alcalde2026quantifying, li2026diverse}. In particular, the rate of convergence to a single cluster in this context is established in \cite{chen2025quantitative}. The transient clustering dynamics in the context of transformers is studied in \cite{geshkovski2024dynamic, bruno2024emergence, bruno2025multiscale}. 

Closest to the present work are the
stochastic formulations of Neural ODEs and transformers, in which the randomness appears due to the random initialization of the parameters. In particular, the pathwise synchronization of the flow of randomly initialized Neural ODEs is characterized in \cite{engel2026random, agazzi2026stochastic}. Similarly, clustering in random attention-based models is established in \cite{fedorov2026clustering, koubbi2026homogenized}. We also remark that the pathwise synchronization is conceptually different from the clustering in noisy transformer models \cite{shalova2026solutions, balasubramanian2025structure}, which correspond to interacting particle systems with independent noises.

\subsection{Main Results}
\label{sec:intro-main}
In this work we study the effect of additive noise on the two-point process of the RQF and the metastable behaviour arising in the small-forcing regime $\gamma \to 0$. We characterize both the global attractor and the transient clustering dynamics appearing on the faster time-scale. We complement the analysis by studying the one-point motion and showing that no metastability arises on the level of one-point dynamics.

To introduce the results, we will require some preliminary facts. Recall that the processes $Q_t$ and $W_t$ are independent and therefore the probability space generated by $(Q_t, W_t)$ is in fact a product space $\Omega = \Omega^Q \times \Omega^W$, where $Q_t  = Q_t(\omega^Q)$ and $W_t  = W_t(\omega^W)$. In case $\gamma = 0$, the space $\Omega^W$ is redundant and easily factors out. In addition, the SDE \eqref{eq:rqf} has smooth coefficients and thus admits a path-wise solution, see Proposition \ref{prop:SDE}. We denote the corresponding solution map by $X_t^\omega(x_0)$, where $x_0 \in \bbS^{n-1}$ denotes the initial condition $X_0 = x_0$. 

With this notation we are ready to present the main results. In particular, recall the characterization of the two-point process without forcing from \cite{engel2026random}.
\begin{theorem}[$\gamma =0$, {\cite[Theorem 4.8]{engel2026random}}]
\label{th:intro-0}
There exists a random set $A^Q(\omega) = A^Q(\omega^Q)$ consisting of two anti-polar points
\[
A^Q(\omega) = \{a^Q(\omega), -a^Q(\omega)\},
\]
where $a^Q(\omega)$ is measurable with respect to the past, and for all $x_0 \in \bbS^{n-1}$ the pathwise solution $X_t^{\omega} (x_0)$ in the long-time limit converges to $A^Q(\theta_t\omega)$:
\[
\dist (X_t^{\omega} (x_0), A^Q(\theta_t\omega)) \to 0 \quad \Omega-\text{a.s.},
\]
where $\theta_t$ is the time-shift defined in Eq. \eqref{eq:shift}.
\end{theorem}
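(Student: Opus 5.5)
The plan is to reduce everything to the projective dynamics. For $\gamma=0$ the equation is linear-then-projected: if $Y_t$ solves the linear Stratonovich SDE $\rmd Y_t = \partial Q_t\, Y_t$ in $\bbR^n$, then $X_t = Y_t/|Y_t|$ solves \eqref{eq:rqf}. Hence $X_t^\omega(x_0) = \Phi_t(\omega)x_0/|\Phi_t(\omega)x_0|$, where $\Phi_t(\omega)$ is the fundamental matrix of the linear cocycle. The symmetry $x\mapsto -x$ is therefore built in, and the natural attractor lives on projective space: $\pm a^Q$ should be the direction selected by the top Lyapunov exponent of the \emph{backward} (pullback) cocycle.

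The first step is to compute the Lyapunov spectrum of $\Phi$. Because $Q_t$ is a symmetric GOE-type Brownian matrix, the law is invariant under conjugation by orthogonal matrices. So the one-point projective motion is a spherical Brownian motion with uniform invariant measure, and the Furstenberg–Khasminskii formula can be evaluated explicitly. For the gaps, I would use the two-point motion. Set $\rho_t = \langle X_t, Y_t\rangle$ for two solutions, or equivalently study the volume growth of $\Phi_t$ on $2$-planes via $\Lambda^2\Phi_t$. Itô's formula for $\rho_t$ gives a one-dimensional diffusion on $[-1,1]$ whose drift and diffusion coefficients depend only on $\rho$, again by isotropy. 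The claim $\lambda_1>\lambda_2$ then amounts to $\rho_t\to\pm1$ almost surely.

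The cleanest route for this (and the one I expect is used in \cite{engel2026random}) is a Feller explosion / boundary classification test. Write the generator of $\rho_t$, typically of the form $a(1-\rho^2)\partial_\rho^2 + b\rho(1-\rho^2)\partial_\rho$ up to constants. Compute the scale function and check that the boundaries $\pm1$ are attracting, with $\rho_t$ converging to them exponentially fast. Since $\rho_t$ and $-\rho_t$ have the same law, both endpoints are reached with positive probability, which is exactly the anti-polar (rather than singleton) limit. This boundary classification is the main technical obstacle: the diffusion degenerates at $\pm1$, and one must verify that the boundaries are attracting but not reached in finite time (so the solutions never coincide). A quantitative exponential rate, i.e.\ a positive Lyapunov gap, is also needed to pass from pairwise to uniform statements.

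Finally, I would construct $a^Q$ by pullback. Consider $\Phi_t(\theta_{-t}\omega)$ as $t\to\infty$. By the multiplicative ergodic theorem with simple top exponent, the normalized images $\Phi_t(\theta_{-t}\omega)x/|\cdot|$ converge, up to sign, to a single direction $\pm a^Q(\omega)$ for every $x$ outside a hyperplane. This direction is measurable with respect to the past $\sigma(Q_s-Q_0, s\le0)$ and is equivariant: $\Phi_t(\omega)a^Q(\omega)\parallel a^Q(\theta_t\omega)$. Forward convergence $\dist(X_t^\omega(x_0),A^Q(\theta_t\omega))\to0$ for \emph{every} $x_0$ is then handled as follows. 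The exceptional set for fixed $x_0$ is a random hyperplane, namely the Oseledets complement for the forward cocycle, which is independent of $x_0$ and has a law with no atoms on fixed points, because the forward filtration is independent of the past. Combining the two-point contraction with pullback equivariance gives the statement almost surely for each $x_0$, which is the sense of Theorem \ref{th:intro-0}.
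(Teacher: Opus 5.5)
Your plan is correct in outline, but it follows a different route from the one the paper relies on. The paper imports this theorem from \cite{engel2026random} and points to a Feller test for $Z_t$ there. The argument it carries out itself, in Theorem~\ref{th:main} (to which Proposition~\ref{lem:gamma-0} is declared analogous), uses only the two-point diffusion $Z_t=\langle X_t,Y_t\rangle$ and abstract attractor theory:
\begin{itemize}
\item it shows $Z_t$ converges to the relevant boundary;
\item it obtains the attractor as the support of the sample measures $\mu_\omega$ of Proposition~\ref{prop:ExWeakPntAttr}, identifying the support through Baxendale's two-point statistical equilibrium;
\item it gets past-measurability of the atoms from Le Jan's dichotomy (Proposition~\ref{prop:discrete});
\item it upgrades to almost sure convergence from each fixed $x_0$ by applying the freezing Lemma~\ref{lem:freezing} to $(x_0,a^Q(\omega))$.
\end{itemize}

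You instead use that for $\gamma=0$ the flow is the projectivization of the linear cocycle $\rmd Y=\partial Q_t\,Y$. You take $a^Q(\omega)$ to span the top Oseledets space $E_1(\omega)$. The multiplicative ergodic theorem then gives past-measurability and equivariance $\varphi(t,\omega,a^Q(\omega))=\pm a^Q(\theta_t\omega)$ directly, without sample measures. It also gives an explicit description of $a^Q$ and a pathwise rate $e^{-(\lambda_1-\lambda_2)t+o(t)}$. Your final step is the same freezing argument as the paper's.

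The cost is that everything rests on the linear structure. The forcing $\gamma P_X\partial W_t$ destroys it: its vector fields are even in $x$, so not projectively linear. Hence your argument does not carry over to Theorem~\ref{th:main}. Also, a pathwise rate with a random constant does not yield the in-expectation bound of Proposition~\ref{lem:gamma-0} that the coupling in Theorem~\ref{th:main-meta} uses.

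Several intermediate claims need correcting, though none is fatal.

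\textbf{The generator.} It is $L_0=2(1-z^2)^2\partial_{zz}-2z(1-z^2)\partial_z$, not the form you guess. With a factor $(1-z^2)$ in front of $\partial_{zz}$, the endpoints would be reached in finite time. With the correct $L_0$ you can bypass the Feller test entirely. Indeed $L_0\log(1-z^2)=-4$, and the martingale part of $\log(1-Z_t^2)$ has quadratic variation rate $16Z_t^2\le 16$. Hence $\frac1t\log(1-Z_t^2)\to-4$ almost surely. This gives both $Z_t\to\pm1$ and the gap $\lambda_1-\lambda_2=2$ that your MET step requires. Convergence $Z_t\to\pm1$ alone would not rule out $\lambda_1=\lambda_2$.

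\textbf{The pullback claim.} The images $\Phi_t(\theta_{-t}\omega)x$ need not converge for all $x$ off a fixed hyperplane, because the exceptional directions move with $t$. Simply define $a^Q$ from $E_1(\omega)$.

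\textbf{Avoiding the forward hyperplane.} Independence of past and future does not by itself show that a fixed $x_0$ avoids the forward Oseledets hyperplane $V_2(\omega)$. One fix is isotropy: the law of $V_2$ is $O(n)$-invariant, so $\bbP(x_0\in V_2)$ is constant in $x_0$ and integrates to zero over $\bbS^{n-1}$. The other fix is what your last sentence suggests: use independence of $a^Q$ from the future noise to apply the two-point result to the pair $(x_0,a^Q(\omega))$ via Lemma~\ref{lem:freezing}.
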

The random set $A^Q$ is in fact the forward attractor of the corresponding random dynamical system as defined in Section \ref{sec:attractor}. In case of the non-zero forcing, $A^Q(\omega)$ no longer attracts all the trajectories in the long-time limit due to the effect of the additive noise. Instead, all the trajectories synchronize to a single-valued attractor $A(\omega)$. However, on a time-scale $t\sim \log |\gamma|^{-1}$, the set $A^Q(\omega)$ is still attractive, and thus in the forced regime we call $A^Q(\omega)$ the 'meta'-attractor after the 'meta'-stable behaviour it describes. We, therefore, obtain the following characterization of the dynamics for $\gamma \neq 0$.
\begin{theorem}[$\gamma \neq 0$]
\label{th:intro-gamma}
There exists a random set $A^Q(\omega^Q)$ as in Theorem \ref{th:intro-0} and a random singleton $ A(\omega) = \{a(\omega)\}$ such that the dynamics consists of the two stages:
\begin{itemize}
    \item (convergence to the 'meta'-attractor $A^Q$):
    \[\bbE \dist (X_t^{\omega} (x_0), A^Q(\theta_t\omega)) \lesssim e^{-\frac{1}{2}t} + |\gamma| C(n, t), \]
    \item (convergence to the global attractor $A$):
    \[\bbE \dist (X_t^{\omega} (x_0), A(\theta_t\omega)) \lesssim e^{-\lambda(n, \gamma)t}, \quad \lambda(n, \gamma) \sim \frac{1}{2}\gamma^2(n-1),
    \]
\end{itemize} 
where $\theta_t$ is the time-shift as in Eq. \eqref{eq:shift} and $C(n, t) \lesssim \sqrt{n}e^{2t}$ for large $t$. The characterization holds for an arbitrary choice of $x_0$.
\end{theorem}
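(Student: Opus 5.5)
The plan has two parts. First, bound the first stage by comparing the forced flow with the unforced flow driven by the same $\omega^Q$. Second, reduce the second stage to a one-dimensional diffusion for the overlap of two points.

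\textbf{First stage.} Write $Y_t^{\omega^Q}(x_0)$ for the solution of \eqref{eq:rqf} with $\gamma=0$, driven by the same $Q$ and the same initial condition. By the triangle inequality,
\[
\dist(X_t,A^Q(\theta_t\omega))\le |X_t-Y_t|+\dist(Y_t,A^Q(\theta_t\omega)).
\]
For the second term I would use the quantitative form of Theorem \ref{th:intro-0} from \cite{engel2026random}. For two unforced points, the overlap $\rho_t=\langle Y_t(x),Y_t(y)\rangle$ is an autonomous diffusion on $[-1,1]$. Writing $\rho = \cos\phi$, the function $\sin\phi$ decays in expectation like $e^{-t/2}$. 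Taking $a^Q$ as a pullback limit then gives $\bbE\dist(Y_t,A^Q(\theta_t\omega))\lesssim e^{-t/2}$.

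For the first term I would apply Itô's formula to $|X_t-Y_t|^2$ after converting to Itô form. Both drifts and multiplicative coefficients are smooth and Lipschitz on the sphere, with constants of order one. The forcing contributes a martingale part of size $\gamma$ and an Itô correction of size $\gamma^2 (n-1)$. A Gronwall argument should then give
\[
\bbE|X_t-Y_t|^2\lesssim \gamma^2 n\, e^{4t}.
\]
Taking square roots yields $|\gamma|C(n,t)$ with $C(n,t)\lesssim\sqrt n e^{2t}$, which matches the stated rate and suggests this is the intended route.

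\textbf{Second stage.} Take two points driven by the same $(Q,W)$ and compute, via Itô, the SDE for $\rho_t=\langle X_t(x),X_t(y)\rangle$. Both the $Q$ part and the $W$ part depend only on $\rho$. The $Q$ part is rotation invariant. The $W$ part contributes $\gamma\langle P_Xy+P_Yx,\rmd W\rangle$, which has quadratic variation $\gamma^2|P_Xy+P_Yx|^2=2\gamma^2(1-\rho^2)(1+\rho)$, plus an Itô drift of the form $-\gamma^2(n-1)\rho$ corrected by the cross term. The antipodal symmetry $\rho\mapsto-\rho$ of the unforced generator is broken: near $\rho=-1$ the forcing makes the point $-1$ repelling, while $\rho=1$ stays absorbing.

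With a Lyapunov function $V(\rho)=(1-\rho)^{\alpha}$ or $\sin$-type function, I would show $LV\le -\lambda V$ with $\lambda\sim\frac12\gamma^2(n-1)$ for small $\gamma$. This gives $\bbE|X_t(x)-X_t(y)|\lesssim e^{-\lambda t}$ uniformly in the initial points. By a standard argument (Section \ref{sec:attractor} facts) this produces a pullback singleton attractor $a(\omega)$: the images $X^{\theta_{-s}\omega}_s(\bbS^{n-1})$ form a Cauchy family with diameter decaying in expectation, with $\bbE\operatorname{diam}\le e^{-\lambda s}$ obtained via a chaining or covering argument. Finally, $\bbE\dist(X_t,a(\theta_t\omega))$ is controlled by the diameter of the image.

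\textbf{Main obstacle.} I expect the hardest step to be the Lyapunov estimate near $\rho=-1$ and getting the sharp rate. At $\gamma=0$ that point is attracting, so the escape is driven only by $\gamma^2$ terms, while the $Q$ part pushes toward it at order one. The candidate $V$ must balance these, probably by choosing $\alpha$ small, possibly $\gamma$-dependent, so that the order-one contributions cancel to leading order. If that fails, I would instead compute the top Lyapunov exponent or the spectral gap of the one-dimensional generator directly through its speed measure, obtaining $\lambda$ from a perturbative eigenvalue expansion in $\gamma^2$.
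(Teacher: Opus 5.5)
Your first stage is essentially the paper's proof of Theorem~\ref{th:main-meta}. You couple $\varphi^\gamma$ with $\varphi^0$ under the same $Q$, use the triangle inequality, control the unforced part by exponential decay of a power of $1-Z_t^2$ plus past-measurability of $a^Q$, and run It\^o--Gronwall on $\|X^0_t-X^\gamma_t\|^2$. One detail: the paper proves the decay with $(1-\rho^2)^{1/4}=\sqrt{\sin\phi}$; $\sin\phi$ itself satisfies $L_0\sin\phi=-2\sin^3\phi$ and gives no uniform rate.

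\textbf{The gap: constructing $a(\omega)$ from pullback images.} In the second stage you build $a(\omega)$ from $\varphi(s,\theta_{-s}\omega,\bbS^{n-1})$ and bound $\bbE\dist(X_t,a(\theta_t\omega))$ by the diameter of that image. This cannot work. The RQF has smooth coefficients on a compact manifold, so $\varphi(t,\omega,\cdot)$ is a diffeomorphism of $\bbS^{n-1}$. Hence $\varphi(s,\theta_{-s}\omega,\bbS^{n-1})=\bbS^{n-1}$, with diameter $\pi$ for every $s$. No chaining or covering argument can give $\bbE\operatorname{diam}\le e^{-\lambda s}$, and the diameter only yields the trivial bound $\pi$. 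The singleton is merely a \emph{point} attractor: there is a random repeller whose neighbourhood is stretched over the whole sphere.

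The paper transfers the uniform two-point bound to the attractor in three steps:
\begin{enumerate}
\item $(1-Z_t)^p$ is a bounded nonnegative supermartingale, so $\dist(X_t,Y_t)\to0$ a.s.\ for every pair of initial points.
\item By Baxendale's Proposition 2.6, the two-point law from $\bar\rho\times\bar\rho$ converges to $\bbE[\mu_\omega\times\mu_\omega]$. This forces $\mu_\omega=\delta_{a(\omega)}$ with $a$ being $\calF_{-\infty,0}$-measurable (Propositions~\ref{prop:ExWeakPntAttr} and~\ref{prop:discrete}).
\item The freezing Lemma~\ref{lem:freezing}, applied to the two-point RDS with random initial point $(x_0,a(\omega))$, gives $\bbE\dist(\varphi(t,\omega,x_0),\varphi(t,\omega,a(\omega)))\le\sup_{x,y}\bbE\dist(\varphi(t,\omega,x),\varphi(t,\omega,y))\le\pi e^{-\lambda_\gamma t}$. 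Invariance then gives $\varphi(t,\omega,a(\omega))=a(\theta_t\omega)$.
\end{enumerate}
The missing idea is past-measurability combined with independence of past and future, in place of a uniform contraction of the sphere that is false. If you want a pullback construction, pull back a \emph{single} trajectory, $a(\omega)=\lim_k\varphi(k,\theta_{-k}\omega,x)$. By the cocycle property, the $k$-th increment is a two-point distance at time $k$ between $x$ and $\varphi(1,\theta_{-k-1}\omega,x)$, and the latter is independent of the noise on $[-k,0]$. Freezing then bounds its expectation by $\pi e^{-\lambda k}$, which is summable.

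\textbf{The Lyapunov step.} First, a computational error: the $W$-part of $\rmd\rho$ is $\gamma(1-\rho)\langle x+y,\rmd W\rangle$. Its quadratic variation is $2\gamma^2(1-\rho)^2(1+\rho)$, not $2\gamma^2(1-\rho^2)(1+\rho)$; it must vanish to second order at the invariant diagonal $\rho=1$.

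With the correct generator (Lemma~\ref{lem:z}) and $V=(1-\rho)^p$, one gets $L_\gamma V=-p\Lambda_pV$, where $\Lambda_p=\gamma^2(n-1)+s\bigl(2-p(\gamma^2+2s)\bigr)$ and $s=1+\rho\in[0,2]$. This is concave in $s$. Your anticipated obstacle at $\rho=-1$ does not occur:
\begin{itemize}
\item The $Q$-part acts as $-2ps(1-ps)V\le0$ for $p\le\frac12$ and vanishes at $s=0$. There the forcing alone supplies the rate $p\gamma^2(n-1)$.
\item The binding constraint is the diagonal $s=2$, where $\Lambda_p=\gamma^2(n-1)+4-2p(\gamma^2+4)$. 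So $\min\Lambda_p=\gamma^2(n-1)$ exactly when $p\le 2/(\gamma^2+4)$.
\end{itemize}
Choosing the exponent small therefore gives only the rate $p\gamma^2(n-1)\ll\frac12\gamma^2(n-1)$. The stated asymptotics require the largest admissible exponent, $p=2/(\gamma^2+4)\to\frac12$. This gives $\lambda_\gamma=\frac{2\gamma^2(n-1)}{\gamma^2+4}$, and Lemma~\ref{lem:comparison} (valid since $p\le\frac12$) converts it into the distance bound.

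\textbf{The $n$-dependence in the first stage.} The $n$-independent exponent in $C(n,t)\lesssim\sqrt n e^{2t}$ does not follow from ``order-one Lipschitz constants''. The It\^o drift $-\frac{(n-1)(1+\gamma^2)}{2}X$ and the $Q$-noise each contribute terms of order $n\|V_t\|^2$. Only their exact cancellation (Lemma~\ref{lem:vt}) leaves $\lambda_V=2\rho^2+2\rho-\frac{\gamma^2(n-1)}{2}\le4$. A crude Lipschitz--Gronwall bound would give $e^{cnt}$.
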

In particular we obtain explicit convergence rates in expectation to the random attractor in both forced and non-forced regimes, which allows us to study the intermediate regime of convergence to the anti-polar configuration.
We highlight that the two attractors $A^Q$ and $A$ are structurally different and therefore the long-term behaviour of the two-point motion of the system with forcing differs from the non-forced case and exhibits a multi-scale behavior, see Figure \ref{fig:intro}. At the same time, this difference cannot be detected on the level of one point motion as follows from the following result, see also Theorem \ref{th:main-distributional}.
\begin{theorem}[RQF is a Brownian motion] For any $\gamma \in \bbR$ the process $X_{t/(1+\gamma^2)}$ is an $\bbS^{n-1}$-valued Brownian motion.
\end{theorem}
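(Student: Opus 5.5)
The plan is to identify the generator of the one-point motion $X_t$ and show that it equals $\frac{1+\gamma^2}{2}\Delta_{\bbS^{n-1}}$. The statement then follows from well-posedness of the martingale problem for the Laplace–Beltrami operator on the compact manifold $\bbS^{n-1}$, combined with a deterministic time change. Concretely, I would first rewrite \eqref{eq:rqf} in Itô form,
\[
\rmd X_t = P_{X_t}\,\rmd Q_t\, X_t + \gamma P_{X_t}\,\rmd W_t + b(X_t)\,\rmd t,
\]
where $b$ is the Itô–Stratonovich correction. I would then compute separately the quadratic variation of the martingale part and the drift $b$.

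\emph{Martingale part.} Write $\rmd Q_{ij} = (\rmd B_{ij}+\rmd B_{ji})/\sqrt2$. Then $\rmd Q_{ij}\,\rmd Q_{kl} = (\delta_{ik}\delta_{jl}+\delta_{il}\delta_{jk})\,\rmd t$, and hence
\[
\rmd(\rmd Q\, x)\,(\rmd Q\, x)^T = (|x|^2 I + xx^T)\,\rmd t .
\]
Projecting with $P_x$ kills the $xx^T$ term, so on $\bbS^{n-1}$ the covariance of $P_x\,\rmd Q\, x$ is $P_x\,\rmd t$. Since $W$ is independent of $Q$, the additive forcing contributes $\gamma^2 P_x\,\rmd t$. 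The total martingale covariance is therefore $(1+\gamma^2)P_{X_t}\,\rmd t$, which is exactly $(1+\gamma^2)$ times that of spherical Brownian motion.

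\emph{Drift.} I would avoid computing $b$ term by term and instead split it into normal and tangential parts.

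For the normal part, use that the Stratonovich equation has tangential coefficients, so $|X_t|=1$ for all $t$ (Proposition \ref{prop:SDE}). Applying Itô's formula to $|X_t|^2$ gives
\[
0 = 2X\cdot b + \operatorname{tr}\bigl((1+\gamma^2)P_X\bigr),
\]
so the normal component is $X\cdot b = -\tfrac{(n-1)(1+\gamma^2)}{2}$.

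For the tangential part, I would argue by symmetry. For any $O\in O(n)$, the pair $(OQ_tO^T, OW_t)$ has the same law as $(Q_t,W_t)$, and $OX_t$ solves the same equation driven by the transformed noise. Since the correction term is a deterministic function of the coefficients, this gives $b(Ox)=Ob(x)$ for all $O\in O(n)$. Taking $O$ to be a reflection fixing $x$ and any chosen tangent direction $v$, but flipping another tangent direction, shows that the tangential part of $b(x)$ is fixed by all such reflections and hence is zero. This uses $n\ge 3$; for $n=2$, I would use the reflection $x\mapsto$ its mirror image across the line through $x$, which reverses the single tangent direction and so also forces it to vanish. As a sanity check, the explicit computation of $b$ is also short: it is $\frac12\sum$ of the derivatives of the vector fields $x\mapsto P_x E_{ij}x$ and $x\mapsto P_x e_k$ along themselves. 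Either route yields
\[
b(x) = -\frac{(n-1)(1+\gamma^2)}{2}\,x .
\]

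\emph{Conclusion.} The Itô form is now $\rmd X = \sqrt{1+\gamma^2}\,P_X\,\rmd \tilde\beta - \frac{(n-1)(1+\gamma^2)}{2}X\,\rmd t$ for an $n$-dimensional Brownian motion $\tilde\beta$ in law. For $f\in C^\infty(\bbS^{n-1})$, Itô's formula then shows that
\[
f(X_t)-\int_0^t\tfrac{1+\gamma^2}{2}\Delta_{\bbS^{n-1}} f(X_s)\,\rmd s
\]
is a martingale. Setting $Y_t = X_{t/(1+\gamma^2)}$, the process $Y$ solves the martingale problem for $\frac12\Delta_{\bbS^{n-1}}$. By uniqueness of this martingale problem (Stroock–Varadhan; smooth elliptic generator on a compact manifold), $Y$ is a spherical Brownian motion.

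The main obstacle is the drift identification: getting the tangential part right, whether by the symmetry argument or the explicit Stratonovich correction, including the edge case $n=2$. The remaining steps are routine covariance bookkeeping.
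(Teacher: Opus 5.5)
Your proof is correct, but it takes a different route from the paper. The paper splits the equation into the unforced RQF $\rmd U_t = P_{U_t}\partial Q_t U_t$ and the classical spherical Brownian motion $\rmd V_t = P_{V_t}\partial W_t$. It imports $L_U = \frac12\Delta$ from \cite[Theorem 4.3]{engel2026random} and uses independence of $Q$ and $W$, so that the H\"ormander-form generators of the two Stratonovich parts simply add, to get $L = L_U + \gamma^2 L_V = \frac{1+\gamma^2}{2}\Delta$. It then concludes through uniqueness for the Fokker--Planck equation.

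You instead compute the Ito form from scratch. The covariance $(1+\gamma^2)P_x$ comes from the correlation structure of $Q$, the normal part of the drift from the constraint $|X_t| = 1$, and the vanishing of the tangential part from $O(n)$-equivariance, which avoids computing the Stratonovich correction at all. Your route buys three things:
\begin{enumerate}
\item[(i)] it is self-contained and does not rely on the generator computation of the earlier paper;
\item[(ii)] it produces the Ito drift $-\frac{(n-1)(1+\gamma^2)}{2}X_t$ as a by-product, which the paper only quotes later in the proof of Lemma~\ref{lem:z};
\item[(iii)] through martingale-problem uniqueness, it identifies the law of the whole time-changed path directly. The paper states its conclusion for the marginals $\rho_t$ and leaves the process-level statement implicit in the Markov property.
\end{enumerate}
The paper's route is shorter because its only nontrivial computation is outsourced.

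Three small points:
\begin{enumerate}
\item[(a)] The single reflection $O = 2xx^T - I$ fixes $x$ and negates all of $T_x\bbS^{n-1}$, so no case split in $n$ is needed.
\item[(b)] You do not need to rewrite the martingale part as $\sqrt{1+\gamma^2}\int P_X\,\rmd\tilde\beta$, which may require enlarging the probability space. Ito's formula for $f(X_t)$ with the computed covariation and drift already gives the martingale problem.
\item[(c)] In your sanity check, the fields paired with the independent $B^{ij}$ are $\frac{1}{\sqrt2}P_x(E_{ij}+E_{ji})x$, not $P_xE_{ij}x$.
\end{enumerate}
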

\paragraph{Acknowledgments and use of AI.} I am grateful to Maximilian Engel and Enrique Carro Garrido for many insightful discussions on random attractors. The work was supported by the Dutch Research Council (NWO), in the framework of the program VI.Vidi.233.133 `A Rigorous Framework for Transient Random Dynamics'. 

I used AI at different stages of the manuscript preparation. In particular, the use of a regularized function in Lemma \ref{lem:lyap-gamma} and Proposition \ref{lem:gamma-0} was suggested by an AI and the code used to produce illustrations was generated by an LLM. The paper was written entirely by me and all mistakes are mine. 
\section{Notation and Preliminaries}
\label{sec:prelim}
 In this section we introduce the notation and give the necessary theoretical background on SDEs and random dynamical systems. We refer the reader to \cite{Arnold98} for a detailed introduction to RDS. 

For a separable metric space $\calX$ we denote the Borel $\sigma$-algebra on $\calX$ by $\calB(\calX)$. Let $\Omega^Q = C_0(\bbR, \bbR^{n\times n})$ and $\Omega^W = C_0(\bbR, \bbR^{n})$ be the spaces of continuous functions satisfying $\omega(0)=0$ equipped with the metric $d$:
\[
d(\omega, \widehat{\omega}):=\sum_{N=1}^{\infty} \frac{1}{2^N} \frac{\|\omega-\widehat{\omega}\|_N}{1+\|\omega-\widehat{\omega}\|_N}, \quad \|\omega-\widehat{\omega}\|_N:=\sup _{|t| \leq N}\|\omega(t)-\widehat{\omega}(t)\|,
\]
and let $\mathcal{F}^Q =\calB(\Omega^Q), \ \calF^W =\calB(\Omega^W)$. Let $\bbP^Q$ and $\bbP^W$ be the Wiener probability measures on $\mathcal{F}^Q$ and $\calF^W$, where the Wiener probability measure of an $\bbR^m$-valued Brownian motion is given by
\[
\mathbb{P}\left(\left\{\omega \in \Omega: \omega_1(t) \leq x_1, \ldots, \omega_m(t) \leq x_m\right\}\right)=\frac{1}{(2 \pi |t|)^{m / 2}} \int_{-\infty}^{x_1} \cdots \int_{-\infty}^{x_m} e^{-\|y\|^2 / 2|t|} \mathrm{d} y_1 \cdots \mathrm{~d} y_m,
\]
for all $x \in \mathbb{R}^m$.
We denote the product probability space by $(\Omega, \calF, \bbP) = (\Omega^Q \times \Omega^W, \calF^Q \times \calF^W, \bbP^Q\times\bbP^W)$. 

Finally, we define the family of time shifts $\left(\theta_t\right)_{t \in \bbR}$ on the product space $(\Omega, \mathcal{F})$ by
\begin{equation}
\label{eq:shift}
    \theta_t \omega(\cdot):=\omega(t+\cdot)-\omega(t),
\end{equation}
and remark that this family preserves the Wiener measure of a set $A\in \calF$, namely $\bbP(\theta_t^{-1} A) =\bbP(A) $ for all $t\in \bbR, \ A \in \calF$.

\subsection{Preliminaries on SDEs.}
Let $\calM$ be a smooth Riemannian manifold without boundary. Consider an $\calM$-valued Ito SDE of the form
\begin{equation}
\label{eq:sde}
\rmd X_t=F_0\left(X_t\right) \rmd t+\sum_{j=1}^m F_j\left(X_t\right) \rmd W_t^j, \quad X_0=x \in \calM, t \in \mathbb{R}.
\end{equation}
Fixing a probability space of the Brownian motion $W_t$ we define the pathwise solution of \eqref{eq:sde} as follows.
\begin{definition}[Pathwise solution]
    Given a probability space generated by an $m$-dimensional two-sided Brownian motion $(\Omega, \calF, \bbP)$, an SDE \eqref{eq:sde} is said to admit a pathwise solution on $(\Omega, \calF, \bbP)$ with initial condition $x_0 \in \calM$ if there exists a map $X^\omega: \Omega \to C(\bbR, \calM)$ satisfying $\omega$-a.s
    \[
  X^\omega_t= x_0 + \int_0^t F_0\left(X^\omega_s(x_0)\right) \rmd s +\sum_{j=1}^m \int_0^t F_j\left(X^\omega_s(x_0)\right) \rmd W^{\omega, j}_s, \quad X_0^\omega = x_0,
    \]
   where $ W^{\omega,j}_t$ is the $j$-th component of the sample path $W^\omega_t$.
\end{definition}
\begin{definition}[Infinitesimal generator]
    Let the diffusion $X_t$ be a weak solution of \eqref{eq:sde}, the operator $L: C^\infty(\calM) \to C^\infty(\calM)$ 
\[
(Lf)(x) := \lim_{t\downarrow 0}\frac{1}{t} \bbE \left[f(X_t) - f(X_0)\big| X_0 = x\right]
\]
 is called the \emph{infinitesimal generator} of $X_t$. 
\end{definition}

The infinitesimal generator describes the evolution of statistics of a diffusion process, and thus can be used to track the expected distance between two solutions along the dynamics. This is the key component of the proof of Theorems \ref{th:main} and \ref{th:main-meta},  where we will require the following elementary version of Dynkin's formula.
\begin{proposition}[Dynkin's formula, {\cite[Theorem 7.4.1]{oksendal2003stochastic}}]
\label{prop:dynkin}
Let $L$ be the infinitesimal generator of a diffusion process $X_t$ on $\bbR$ with $X_0 = x_0$, then for any $f\in C_c^\infty$ and for all $t\in \bbR_+$
\[
\bbE f(X_t) = f(x_0) + \bbE\left(\int_0^t (Lf)(X_s)\rmd s\right).
\]  
\end{proposition}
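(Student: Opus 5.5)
The plan is to derive the formula directly from It\^o's formula. Write the diffusion on $\bbR$ in It\^o form, $\rmd X_t = b(X_t)\,\rmd t + \sigma(X_t)\,\rmd W_t$ with $b,\sigma$ continuous (for the smooth-coefficient equations of this paper), converting from Stratonovich form first if needed. Then $b$ absorbs the usual correction $\tfrac12 \sigma\sigma'$. For $f\in C_c^\infty$, It\^o's formula gives, almost surely and for every $t\ge 0$,
\[
f(X_t) = f(x_0) + \int_0^t \Big( b(X_s) f'(X_s) + \tfrac12 \sigma^2(X_s) f''(X_s)\Big)\rmd s + \int_0^t \sigma(X_s) f'(X_s)\,\rmd W_s .
\]

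The first step is to identify the drift integrand with $(Lf)(X_s)$, where $L$ is the infinitesimal generator in the sense of the definition above. Take expectations in the display with $X_0=x$ and divide by $t$. The $\rmd s$-integrand $g := b f' + \tfrac12\sigma^2 f''$ is continuous and compactly supported, because $f'$ and $f''$ vanish outside $\operatorname{supp} f$. Hence $g$ is bounded. Continuity of paths and dominated convergence then give $\frac1t\bbE\int_0^t g(X_s)\,\rmd s \to g(x)$. This shows $Lf = g$, provided the stochastic integral has zero mean.

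The second step, and the only real point of care, is that the stochastic integral is a true martingale and not merely a local martingale. Here the compact support of $f$ is what makes it work. The integrand $\sigma(X_s) f'(X_s)$ is bounded by $\sup_{\operatorname{supp} f}|\sigma|\,\|f'\|_\infty<\infty$. Therefore $\bbE\int_0^t |\sigma(X_s)f'(X_s)|^2\,\rmd s<\infty$, so the It\^o integral is an $L^2$ martingale with zero expectation. For a general $C^2$ function one would instead localize with exit times from balls; for $f\in C_c^\infty$ this is unnecessary.

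Taking expectations in the It\^o expansion, the martingale term drops out, and the boundedness of $Lf$ justifies interchanging $\bbE$ with the time integral by Fubini. This yields $\bbE f(X_t) = f(x_0) + \bbE\int_0^t (Lf)(X_s)\,\rmd s$ for all $t\in\bbR_+$, which is the claimed formula.
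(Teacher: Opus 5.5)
Your argument is correct and is essentially the standard proof behind the citation. The paper gives no proof of its own and refers to \cite[Theorem 7.4.1]{oksendal2003stochastic}, which proceeds the same way: It\^o's formula applied to $f(X_t)$, the stochastic integral having zero mean because $f\in C_c^\infty$ makes its integrand bounded, and the generator identified as $bf'+\frac12\sigma^2 f''$ by the same small-time limit you use. The only difference is that the reference states the formula at stopping times $\tau$ with $\bbE\tau<\infty$, which needs an extra truncation $\tau\wedge k$ and an $L^2$ limit, whereas your deterministic $t$ is the special case where that step is unnecessary.
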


Finally, we remark that infinitesimal generator $L$ and its $L^2$ adjoint $L^*$ define the backward and forward Kolmogorov evolutions respectively. The latter, 
\[
\partial_t \rho_t - L^*\rho_t =0,
\] 
describes the evolution of $\rho_t= \law X_t$ and is also known as the Fokker-Planck equation.

\subsection{Random Dynamical Systems}
\label{sec:rds}
\begin{definition}[Random dynamical system (RDS)] Let $(\Omega, \calF, \bbP)$ be an abstract probability space and $(\calM, \calB(\calM))$ be a compact Riemannian manifold with the corresponding Borel $\sigma$-algebra. An RDS consists of the two components:
\begin{enumerate}
    \item \emph{model of the noise:} a family of $\calF$-measurable measure-preserving maps $(\theta_t: \Omega \to \Omega)_{t \in \mathbb R}$, satisfying:
    \[
    \begin{aligned}
        \theta_0\omega &= \omega, \ \forall \omega\in \Omega, \\  \theta_{t+s}\omega &= \theta_{t}\theta_{s}\omega, \ \forall t, s \in \bbR, \omega \in \Omega,
        \end{aligned}
    \]

\item \emph{model of the dynamics:} a $(\calB(\bbR) \otimes \calF \otimes \calB(\calM))$-measurable map $\varphi: \bbR \times \Omega \times \calM \to \calM$ which, for all $\omega \in \Omega$ and $x \in \calM$ satisfies the \emph{cocycle property} 
\begin{equation*} \label{eq:cocycle}
    \varphi(0, \omega, x) = x, \ \text{ and } \ \varphi(t+s, \omega, x) = \varphi(t, \theta_s\omega, \varphi(s, \omega, x)), \ \forall s, t \in \bbR.
\end{equation*} 
\end{enumerate}
\end{definition}
Given an RDS $(\varphi, \theta)$, for any $u,v \in \mathbb R$ with $u < v$, we denote by $\mathcal{F}_{u,v}$ the sub-$\sigma$-algebra generated by  the random variables $\varphi(t, \theta_s \omega, x)$ for $x \in \calM$ and $t, s \in \mathbb R$ with $u\leq s\leq v$ and $0< t \leq v-s$. We say that the RDS satisfies the Markov property if its future is independent of its past, namely:
\begin{definition}[Markov RDS]
The RDS $(\theta, \varphi)$ is called \emph{Markov} if $\mathcal F_{-\infty,0}$ and $\mathcal{F}_{0,\infty}$ are independent.
\end{definition}
Due to the following result, any SDE with sufficiently regular coefficients can be uniquely reformulated in the form of a Markov RDS.
\begin{proposition}[SDE as an RDS, {\cite[Theorem 2.3.42]{Arnold98}}] \label{prop:SDE}
Consider an Ito SDE \eqref{eq:sde} or the corresponding Stratonovich SDE and assume that $F_0 \in C^{1, \delta}(\calM, T\calM)$ and every $F_j \in C^{2, \delta}(\calM, T\calM)$  for some $\delta \in(0,1]$. Let $(\Omega, \calF, \bbP)$ be a (fixed) probability space generated by a two-sided $m$-dimensional Brownian motion.
Then there exists a unique $(\calB(\bbR) \otimes \calF \otimes \calB(\calM))$-measurable map $\varphi(t, \omega, x)$ such that
\begin{itemize}
\item $X_t^\omega =\varphi(t, \omega, x)$ is a pathwise solution of \eqref{eq:sde},
    \item $(\theta, \varphi)$ is a Markov RDS, where $(\theta_t)_{t\in \bbR}$ is the family of time shifts defined in Eq. \eqref{eq:shift}.
\end{itemize}
\end{proposition}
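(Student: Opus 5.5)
The statement is Proposition~\ref{prop:SDE}, cited from \cite[Theorem 2.3.42]{Arnold98}. The plan is to reduce it to the standard theory of stochastic flows on manifolds (Kunita's theory) and then check the cocycle and Markov properties by hand, using the shift structure of the canonical Wiener space.

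\textbf{Step 1: reduce to Euclidean space.} Since $\calM$ is compact, embed it in some $\bbR^N$ by Whitney's theorem. Extend the vector fields $F_0,\dots,F_m$ to compactly supported fields on $\bbR^N$, keeping the regularity $C^{1,\delta}$ for $F_0$ and $C^{2,\delta}$ for the $F_j$. For Stratonovich equations, pass to the Itô form. The correction term $\frac12\sum_j (DF_j)F_j$ lies in $C^{1,\delta}$, which is exactly why $F_j$ is assumed one degree smoother than $F_0$.

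\textbf{Step 2: build a flow on one time interval.} On the filtered Wiener space, Kunita's theory gives a version of the solution that is jointly continuous in $(t,x)$ and is a flow of $C^1$ diffeomorphisms forward in time. This uses Kolmogorov's continuity criterion applied to moment estimates of $X_t(x)-X_s(y)$. Invariance of $\calM$ holds because the fields are tangent to it: run the same Stratonovich equation in local charts and use uniqueness. Backward times are handled by solving the time-reversed equation driven by $\omega(-\cdot)$.

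\textbf{Step 3: the perfection step (main obstacle).} Flow relations such as $\varphi(t+s,\omega)=\varphi(t,\theta_s\omega)\circ\varphi(s,\omega)$ initially hold only on an exceptional set depending on $s,t$, whereas the statement requires them for all $\omega$ and all $s,t$.
\begin{itemize}
\item First I would try to obtain the relation simultaneously for all rational $s,t$, then for all real $s,t$ by continuity, on a single full-measure $\theta$-invariant set $\Omega_0$.
\item Alternatively, I would invoke the perfection theorem for crude cocycles (Arnold, Theorem~1.2.1 and its continuous-time version).
\item On the complement of $\Omega_0$, set $\varphi(t,\omega)=\mathrm{id}$. This is compatible with the shifts because $\Omega_0$ is invariant.
\end{itemize}
Measurability in $(t,\omega,x)$ follows from continuity in $(t,x)$ together with measurability in $\omega$ for fixed $(t,x)$.

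\textbf{Step 4: Markov property and uniqueness.} For $s\ge0$, the variables $\varphi(t,\theta_s\omega,x)$ depend only on increments of $\omega$ on $[s,s+t]$. Similarly, $\mathcal F_{-\infty,0}$ is generated by increments on $(-\infty,0]$. Independent increments of two-sided Brownian motion then give the required independence. Uniqueness, up to indistinguishability, follows from pathwise uniqueness of Lipschitz SDEs combined with continuity in $x$.
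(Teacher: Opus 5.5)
Your sketch reconstructs the standard argument behind \cite[Theorem 2.3.42]{Arnold98}: Kunita's flow theorem, perfection of the resulting crude cocycle, and independence of Brownian increments for the Markov property. This is all the paper relies on, since it offers no proof beyond that citation. In Step 3, commit to the perfection theorem: the route through rational $s,t$ and continuity does not work by itself. The map $\omega\mapsto\varphi(t,\omega)$ is only measurable, so $s\mapsto\varphi(t,\theta_s\omega)$ need not be continuous, and the exceptional null sets cannot simply be made $\theta$-invariant; handling exactly this is what the perfection theorem is for.
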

Due to their equivalence, we switch to the RDS notation $\varphi(t, \omega, x)$ for the pathwise solutions of Eq. \eqref{eq:sde}.

Moreover, to use the probabilistic estimates of Dynkin's type for Markov RDS with random initial conditions we will need the following property of conditional expectation.

\begin{lemma}[Freezing lemma]
\label{lem:freezing}
Let $(\theta, \hat \varphi)$ be a Markov RDS on a manifold $\calM$ over the probability space $(\Omega, \calF, \bbP)$ and let $\beta: \Omega \to \calM$ be an $\calF_{-\infty, 0}$-measurable random point. Let $\psi: C(\bbR_+, \calM) \to \bbR$ be a Borel-measurable bounded map and define 
\[
\Psi(x) :=\bbE[\psi(\hat\varphi(\cdot, \omega, x))], \quad \forall x\in \calM.
\]
Then $\Psi$ is Borel-measurable, bounded and
\[
\bbE[\psi(\hat\varphi(\cdot, \omega, \beta(\omega)))\big| \calF_{-\infty, 0}] = \Psi(\beta(\omega)), \quad \Omega-\text{a.s.}.
\]
In particular, $\Psi(\beta(\omega))$ is $\calF_{-\infty, 0}$-measurable and satisfies
\[
\inf_{x}\Psi(x) \leq \bbE[\Psi(\beta(\omega))] \leq \sup_{x}\Psi(x).
\]
\end{lemma}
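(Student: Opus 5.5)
We prove the Freezing lemma by first treating simple random points and then passing to a monotone-class limit. Throughout, write $\Phi(\omega,x):=\psi(\hat\varphi(\cdot,\omega,x))$.

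\textbf{Step 1: measurability and boundedness of $\Psi$.} By joint measurability of $\hat\varphi$ in $(t,\omega,x)$, together with continuity of $t\mapsto\hat\varphi(t,\omega,x)$, the map $(\omega,x)\mapsto\hat\varphi(\cdot,\omega,x)\in C(\bbR_+,\calM)$ is $\calF\otimes\calB(\calM)$-measurable. This holds because the Borel $\sigma$-algebra on $C(\bbR_+,\calM)$ is generated by the evaluation maps. Hence $\Phi$ is jointly measurable and bounded by $\|\psi\|_\infty$. Fubini's theorem then gives that $\Psi(x)=\int\Phi(\omega,x)\,\rmd\bbP(\omega)$ is Borel-measurable with $|\Psi|\le\|\psi\|_\infty$.

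\textbf{Step 2: the key independence.} For each fixed $x$, the random variable $\Phi(\cdot,x)$ depends only on $\hat\varphi(t,\omega,x)$ for $t>0$ (the value at $t=0$ is the deterministic $x$). It is therefore $\calF_{0,\infty}$-measurable. More generally, $\Phi$ is $\calF_{0,\infty}\otimes\calB(\calM)$-measurable, which follows by the same evaluation-map argument applied with the sub-$\sigma$-algebra. The Markov property says that $\calF_{0,\infty}$ is independent of $\calF_{-\infty,0}$, and $\beta$ is $\calF_{-\infty,0}$-measurable. We are thus in the setting of the classical freezing lemma: for $G$ jointly measurable with respect to $\calG\otimes\calB(\calM)$, with $\calG$ independent of $\calH$ and $\beta$ $\calH$-measurable, one has
\[
\bbE[G(\cdot,\beta)\mid\calH]=g(\beta), \qquad g(x):=\bbE G(\cdot,x).
\]

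\textbf{Step 3: proving the classical freezing lemma.} First take product functions $G(\omega,x)=h(\omega)k(x)$ with $h$ bounded and $\calG$-measurable and $k$ bounded Borel. Then
\[
\bbE[h\,k(\beta)\mid\calH]=k(\beta)\,\bbE[h\mid\calH]=k(\beta)\,\bbE h=g(\beta).
\]
Linearity extends this identity to finite sums of such products. The class of bounded $\calG\otimes\calB(\calM)$-measurable $G$ satisfying the identity is closed under bounded monotone limits, by conditional monotone convergence on the left and monotone convergence in the definition of $g$ on the right. The functional monotone class theorem then yields the identity for all bounded jointly measurable $G$, in particular for $G=\Phi$.

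\textbf{Step 4: conclusion.} The identity shows that $\Psi(\beta)$ is $\calF_{-\infty,0}$-measurable, as a composition of Borel $\Psi$ with measurable $\beta$. Taking expectations gives $\bbE\Psi(\beta)=\bbE\Phi(\cdot,\beta)$, and the pointwise bounds $\inf\Psi\le\Psi(\beta)\le\sup\Psi$ integrate to the stated inequality.

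The main obstacle is the measurability bookkeeping in Step 2, namely ensuring that $\Phi$ is jointly measurable with respect to $\calF_{0,\infty}\otimes\calB(\calM)$ rather than only $\calF\otimes\calB(\calM)$. I would handle it by noting that $\sigma$-algebras of the form $\calF_{0,\infty}\otimes\calB(\calM)$ contain every map $(\omega,x)\mapsto\hat\varphi(t,\omega,x)$ with $t>0$. This uses the cocycle structure and the definition of $\calF_{0,\infty}$, possibly after first restricting to $x$ in a countable dense set and then using continuity in $x$ of the flow.
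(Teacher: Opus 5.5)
Your proposal is correct and takes the same route as the paper. Both show that $(\omega,x)\mapsto\psi(\hat\varphi(\cdot,\omega,x))$ is $\calF_{0,\infty}\otimes\calB(\calM)$-measurable and then apply the classical freezing lemma, which the paper cites from Baldi and you prove via the functional monotone class theorem. Your note that this joint measurability needs something like continuity of $x\mapsto\hat\varphi(t,\omega,x)$ (true for the SDE flows used here) sharpens the paper's one-line claim that it follows from the Markov property.
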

\begin{proof}
    Since $\hat \varphi$ is Markov, the map $(\omega, x) \mapsto \hat \varphi(\cdot, \omega, x)$ is $\calF_{0, \infty} \times \calB(\calM)$-measurable, implying that the same is true for the composition function $\psi(\hat \varphi(\cdot, \omega, x))$. With this remark, the result is a special case of the 'freezing' lemma \cite[Lemma 4.1]{baldi2017stochastic}.
\end{proof}
In particular, the freezing lemma decouples expectations with respect to the past and future when the corresponding $\sigma$-algebras are independent.
\subsection{Random Attractors}
\label{sec:attractor}
Given a state space $\calM$ and a probability space $(\Omega, \calF, \bbP)$, we say that a set-valued map $A: \Omega \to \calB(\calM)$ is a \emph{random compact set} if it is $\Omega$-a.s. compact and the function $\dist(x_0, A(\omega))$ is $\calF$-measurable for all $x_0 \in\calM$. With this, we define a random point attractor of an RDS as the random set attracting all trajectories under the forward dynamics, namely
\begin{definition}[Random Point Attractor]\label{def:Attr}
   A random compact set $(A(\omega))_{\omega \in \Omega}$ is called the \emph{forward point attractor}, if 
   \begin{itemize}
       \item it is $\varphi$-invariant:
       \[
       \varphi(t, \omega, A(\omega)) = A(\theta_t\omega), \quad \forall t\in\bbR,
       \]
   \item for every $x \in \calM$
            $$\dist\big(\varphi(t, \omega, x), A(\theta_t \omega)\big) \to 0, \quad \Omega-\text{a.s.}$$
            \end{itemize}
    \end{definition}
    A random attractor $A(\omega)$ is called \emph{minimal} if for any other random attractor $\tilde A(\omega)$ we have $A(\omega) \subseteq \tilde A(\omega)$, $\Omega$-a.s. Finally, replacing the a.s.
 convergence with convergence in probability we obtain the definition of a \emph{weak random point attractor}. We note that if a strong point attractor exists, it is also a weak point attractor but the converse is not true. Existence of a weak point attractor for ergodic Markov RDS on compact state spaces is guaranteed by the correspondence theorem, see \cite[Theorem 4.2.9]{KuksinShirikyan12}. In particular, the following holds.
 
 \begin{proposition}[Existence of a weak point attractor]
 \label{prop:ExWeakPntAttr}
 Let $(\theta, \varphi)$ be a Markov RDS on a compact Riemannian manifold which admits a unique ergodic measure $\rho$, then  the weak limit 
$$ \varphi(t, \theta_{-t}\omega, \cdot)^* \rho \stackrel{w}{\to} \mu_\omega,  \ \text{ as } t \to \infty,$$
exists almost surely and the random set
 $$A(\omega) := \operatorname{supp}(\mu_\omega)$$
 is the minimal weak point attractor of the RDS $(\theta, \varphi)$.
\end{proposition}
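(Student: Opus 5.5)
The proof goes in three steps: build $\mu_\omega$ as the limit of a bounded backward martingale, show $A=\operatorname{supp}\mu_\omega$ attracts $\rho$-almost every point, and then pass to arbitrary points using unique ergodicity. This is essentially the correspondence theorem \cite[Theorem 4.2.9]{KuksinShirikyan12}, and I would reproduce its argument in the present setting.

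\emph{Step 1 (existence of $\mu_\omega$).} Fix $f\in C(\calM)$ and set
\[
M_t^f(\omega):=\int f\big(\varphi(t,\theta_{-t}\omega,x)\big)\,\rho(\rmd x).
\]
By the cocycle property, $\varphi(t+s,\theta_{-t-s}\omega,\cdot)=\varphi(t,\theta_{-t}\omega,\cdot)\circ\varphi(s,\theta_{-t-s}\omega,\cdot)$. The inner map is measurable with respect to the increments on $[-t-s,-t]$, which are independent of $\calF_{-t,0}$ by the Markov property. Applying Lemma \ref{lem:freezing} (after a time shift), together with invariance of $\rho$ under the transition semigroup, gives $\bbE[M^f_{t+s}\mid\calF_{-t,0}]=M^f_t$. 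Thus $(M^f_t)$ is a bounded martingale with respect to the increasing filtration $\calF_{-t,0}$, and it converges a.s. as $t\to\infty$. Taking $f$ in a countable dense subset of $C(\calM)$, which is possible since $\calM$ is compact, yields an a.s. weak limit $\mu_\omega$ with $\bbE\mu_\omega=\rho$. Passing to the limit in the cocycle identity gives the invariance $\varphi(s,\omega,\cdot)_*\mu_\omega=\mu_{\theta_s\omega}$. Hence $A(\omega)=\operatorname{supp}\mu_\omega$ is invariant, and it is a random compact set because $\dist(x,A(\omega))$ can be written through $\mu_\omega$ of countably many balls.

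\emph{Step 2 (attraction of $\rho$-typical points).} Let $g(\omega,y)=\min\{\dist(y,A(\omega)),1\}$. Measure-preservation of $\theta_t$ turns attraction in probability into
\[
\bbE\,g\big(\omega,\varphi(t,\theta_{-t}\omega,x)\big)\to 0.
\]
Integrating in $\rho(\rmd x)$, the left-hand side becomes $\bbE\int g(\omega,\cdot)\,\rmd\mu^\omega_t$, where $\mu^\omega_t:=\varphi(t,\theta_{-t}\omega,\cdot)_*\rho$. Since $g(\omega,\cdot)$ is continuous and vanishes on $\operatorname{supp}\mu_\omega$, weak convergence $\mu^\omega_t\to\mu_\omega$ and dominated convergence give the limit $0$. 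So $\rho$-almost every point is weakly attracted to $A$.

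\emph{Step 3 (every initial point, and minimality).} This is the main obstacle: passing from $\rho$-a.e. $x$ to every $x$. First I would use that $\Phi_t(y):=\bbE\, g(\theta_t\omega,\varphi(t,\omega,y))$ is nonincreasing in $t$ in a suitable averaged sense. Forward invariance of $A$ and the Markov property (Lemma \ref{lem:freezing}) show that running first for time $s$ and then for time $t$ yields $\Phi_{t+s}(x)=\int\Phi_t(y)\,P_s(x,\rmd y)$. Next, by unique ergodicity on the compact space $\calM$, the Cesàro averages $\frac1T\int_0^T P_s(x,\cdot)\,\rmd s$ converge weakly to $\rho$. Combining these with Step 2, and using regularity of $y\mapsto\Phi_t(y)$ coming from continuity of $\varphi$ in $x$ (Proposition \ref{prop:SDE}), gives $\Phi_t(x)\to 0$ along Cesàro averages and then, via the monotonicity, along $t\to\infty$. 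If continuity of $\Phi_t$ is too weak, I would instead follow the Kuksin--Shirikyan route of coupling the law $P_s(x,\cdot)$ with $\rho$.

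Minimality follows once attraction is established. If $\tilde A$ is any weak point attractor, then attraction of $\rho$-a.e. point, together with the convergence $\mu^\omega_t\to\mu_\omega$, forces $\mu_\omega(\tilde A(\omega))=1$. Since $\tilde A(\omega)$ is closed, this gives $A(\omega)=\operatorname{supp}\mu_\omega\subseteq\tilde A(\omega)$ a.s.
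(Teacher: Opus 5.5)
\paragraph{Verdict.} There is a genuine gap in Step~3. Steps~1 and~2 are sound. They reproduce the correspondence-theorem construction: $\mu_\omega$ as the limit of the bounded martingale $M^f_t$ with respect to $\calF_{-t,0}$, its invariance, and $\int\Phi_t\,\rmd\rho\to0$. The paper gives no argument of its own here and only cites Kuksin--Shirikyan. A minor point: Step~2 only yields $\Phi_t\to0$ in $L^1(\rho)$, not attraction of $\rho$-a.e.\ point.

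\paragraph{Where Step~3 fails.} The identity $\Phi_{t+s}(x)=\int\Phi_t(y)\,P_s(x,\rmd y)$ is false. By invariance, $A(\theta_{t+s}\omega)=\varphi(t,\theta_s\omega,A(\theta_s\omega))$, and $A(\theta_s\omega)$ is $\calF_{-\infty,s}$-measurable. So it depends on the same noise on $[0,s]$ that produces $\varphi(s,\omega,x)$. Freezing on $\calF_{-\infty,s}$ therefore averages over the joint law of $(\varphi(s,\omega,x),A(\theta_s\omega))$, not over $P_s(x,\cdot)\otimes\law(A)$.

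You can see that the identity must fail. If it held, invariance of $\rho$ would make $\int\Phi_t\,\rmd\rho$ constant in $t$. Your Step~2 would then force $\bbE\int\min\{1,\dist(x,A(\omega))\}\,\rho(\rmd x)=0$, i.e.\ $\operatorname{supp}\rho\subseteq A(\omega)$ a.s. That is false for the forced RQF, where $A$ is a singleton and $\rho$ is uniform. The claimed monotonicity of $t\mapsto\Phi_t(x)$ is also unsupported. Without it, the Ces\`aro convergence $\frac1T\int_0^T P_s(x,\cdot)\,\rmd s\rightharpoonup\rho$, which is all unique ergodicity provides, cannot be upgraded to $\Phi_t(x)\to0$.

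\paragraph{What is missing.} You need two ingredients: (a) a way to decouple $A(\omega)$ from the early noise, and (b) convergence $P_t(x,\cdot)\rightharpoonup\rho$ for every $x$, i.e.\ mixing.

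For (a), let $G(\omega,y)=\min\{1,\dist(y,A(\omega))\}$ and $G_r=\bbE[G\mid\calF_{-r,0}]$. These are $1$-Lipschitz in $y$, so L\'evy's upward theorem on a countable dense set gives $\eta_r:=\bbE\sup_y|G-G_r|\to0$. For $t\ge r$ write $\varphi(t,\theta_{-t}\omega,x)=\varphi(r,\theta_{-r}\omega,Y)$, where $Y=\varphi(t-r,\theta_{-t}\omega,x)$ is independent of $\calF_{-r,0}$ and has law $P_{t-r}(x,\cdot)$. Freezing then gives $\Phi_t(x)\le\eta_r+\int K_r\,\rmd P_{t-r}(x,\cdot)$, with the continuous function $K_r(y)=\bbE\,G_r(\omega,\varphi(r,\theta_{-r}\omega,y))$.

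With (b), $\limsup_t\Phi_t(x)\le\eta_r+\int K_r\,\rmd\rho\le 2\eta_r+\bbE\int G(\omega,\cdot)\,\rmd\mu^\omega_r$. The right-hand side tends to $0$ as $r\to\infty$ by your Step~2.

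Unique ergodicity alone does not give (b), and your argument supplies no substitute. In the paper's application mixing does hold, because the one-point motion is a time-changed spherical Brownian motion (Theorem \ref{th:main-distributional}); this is what you should invoke. Your fallback of ``coupling $P_s(x,\cdot)$ with $\rho$'' points in this direction. As written, however, it neither identifies the need for mixing nor handles the dependence of $A(\omega)$ on the whole past.
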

As follows from the result by Le Jan \cite{LeJan1987}, the sample measures can either consist entirely of atoms or be fully continuous with no atoms at all, namely the following characterization holds.
\begin{proposition}[Discrete vs. continuous sample measures {\cite{LeJan1987}}]
\label{prop:discrete}
    The sample measures $\mu_\omega$ are either $\Omega$-a.s. continuous, namely for every $x\in\calM$ satisfy $\mu_\omega(\{x\}) = 0$, or are $\Omega$-a.s. discrete measures supported on $N\in\bbN$ atoms and given by
\begin{equation*}
\mu_{\omega} = \frac{1}{N} \sum_{i=1}^N \delta_{a_i(\omega)},
\end{equation*} 
where each $a_i:\Omega \to \calM$ is an $\mathcal F_{- \infty, 0}$-measurable random point.
\end{proposition}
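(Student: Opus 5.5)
The plan is to recover Le Jan's dichotomy from three facts. First, invariance: $\mu_{\theta_t\omega} = \varphi(t,\omega,\cdot)_*\mu_\omega$, which follows from the limit representation in Proposition \ref{prop:ExWeakPntAttr} and the cocycle property. Second, the flow maps $\varphi(t,\omega,\cdot)$ are homeomorphisms of $\calM$, since under the regularity of Proposition \ref{prop:SDE} the SDE generates a stochastic flow of diffeomorphisms. Third, $\theta$ is ergodic on Wiener space. Throughout I would use the decomposition $\mu_\omega = \mu^{a}_\omega + \mu^{c}_\omega$ into atomic and continuous parts.

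\textbf{Step 1: invariant quantities are constant.} Because each $\varphi(t,\omega,\cdot)$ is a bijection, atoms of $\mu_\omega$ are carried to atoms of $\mu_{\theta_t\omega}$ with the same masses. Hence the following are $\theta$-invariant random variables, and so a.s. constant by ergodicity:
\begin{itemize}
\item the ordered list of atom masses $p_1(\omega)\ge p_2(\omega)\ge\dots$;
\item the total atomic mass $c(\omega)=\mu^a_\omega(\calM)$;
\item the maximal atom mass $m$ and the number $N$ of atoms of mass $m$.
\end{itemize}
Measurability of these functionals follows by writing them through $\mu_\omega\otimes\mu_\omega$ restricted to the diagonal, for instance $\sum_i p_i^2 = (\mu_\omega\otimes\mu_\omega)(\Delta)$. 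Each atom can then be selected measurably, as a function of $\mu_\omega$, to give $a_i(\omega)$. Since $\mu_\omega$ is a limit of $\varphi(t,\theta_{-t}\omega,\cdot)_*\rho$, which depends only on the noise on $[-t,0]$, the atoms $a_i$ are $\calF_{-\infty,0}$-measurable.

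\textbf{Step 2: reduce to two statements.} It remains to show two things: (a) $c\in\{0,1\}$, and (b) if $c=1$ then all atoms have the same mass $m$, so that $N=1/m$ is finite and $\mu_\omega = \frac1N\sum\delta_{a_i}$. For both I would use the Markov structure together with the freezing Lemma \ref{lem:freezing}. Any invariant, past-measurable random measure $\nu_\omega$ has expectation $\bbE\nu_\omega$ that is invariant for the one-point semigroup, hence equal to a constant multiple of $\rho$ by uniqueness. The same holds for its square $\nu_\omega\otimes\nu_\omega$ and the two-point motion. Applying this to $\nu = \mu^{\max}$ (the part carried by atoms of mass $m$) and to $\nu = \mu-\mu^{\max}$ gives two invariant measures for the two-point motion. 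Their diagonal parts are $\bbE[\mu\otimes\mu]|_\Delta$, with mass $\sum p_i^2$.

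\textbf{Step 3 (main obstacle): exclude mixed spectra.} The core difficulty is ruling out atoms of different masses, or atoms coexisting with a continuous part. My first attempt is Le Jan's backward martingale argument. For a Borel set $B$, the quantity $\mu_\omega(B)$ is the a.s. limit of $\rho(\varphi(t,\theta_{-t}\omega,\cdot)^{-1}B)$. I would then fix an atom $a$ of mass $p<m$, or a continuous-part point, together with an atom $b$ of maximal mass. Consider the forward two-point motion $(\varphi(t,\omega,a),\varphi(t,\omega,b))$ started from this past-measurable pair. By the freezing lemma, conditioned on the past it is the two-point motion started at a frozen off-diagonal point. Using non-degeneracy of the two-point generator off the diagonal, this motion comes arbitrarily close to the diagonal with positive probability. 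Separately, attraction to the atoms (weak point attraction) forces preimages of small neighbourhoods of the atom locations to have $\rho$-mass close to the atom masses. The contradiction should come from comparing these two facts with the constancy of the mass spectrum from Step 1: mass near a maximal atom would have to exceed $m$ on a set of positive probability.

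I expect this last step to be the delicate one. It requires making the ``near-collision'' quantitative uniformly in the past via the freezing lemma, and I would follow Le Jan's original argument there closely rather than attempt a new route.
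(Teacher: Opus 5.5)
The paper does not prove this proposition; it quotes it from Le Jan. Your Steps 1 and 2 are sound, but Step 3, which you rightly flag as the heart of the matter, is not yet an argument, and the mechanism you sketch cannot produce a contradiction. Since $\varphi(t,\omega,\cdot)$ is a homeomorphism, two distinct atoms are carried to two distinct atoms with unchanged masses. So however close the two-point motion brings an atom of mass $p<m$ to a maximal atom, no atom of mass exceeding $m$ is ever created. A positive, scale-dependent probability of near-collision is fully compatible with the constant mass spectrum of Step 1. Moreover, nondegeneracy of the two-point generator off the diagonal is not a hypothesis of the statement, and it fails in the paper's own application. For $\gamma=0$ one has $\varphi^0(t,\omega,-x)=-\varphi^0(t,\omega,x)$, so $\{z=-1\}$ is invariant for the two-point motion and the generator $L_0$ degenerates there.

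The missing idea is the uniqueness half of the correspondence between past-measurable invariant random measures and invariant measures of the one-point semigroup. You already have both ingredients: $\bbE\nu_\omega=c\rho$ from Step 2, and the backward martingale representation from Step 3. Let $\nu_\omega$ be $\calF_{-\infty,0}$-measurable with $\varphi(t,\omega,\cdot)_*\nu_\omega=\nu_{\theta_t\omega}$ and constant total mass $c$. Write $\nu_\omega=\varphi(t,\theta_{-t}\omega,\cdot)_*\nu_{\theta_{-t}\omega}$; here $\nu_{\theta_{-t}\omega}$ is $\calF_{-\infty,-t}$-measurable and hence independent of $\calF_{-t,0}$. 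The freezing principle then gives, for continuous $f$,
\[
\bbE\bigl[\nu_\omega(f)\bigm|\calF_{-t,0}\bigr]=c\int_{\calM} f\bigl(\varphi(t,\theta_{-t}\omega,x)\bigr)\,\rho(\rmd x).
\]
As $t\to\infty$ the right-hand side tends to $c\,\mu_\omega(f)$. The left-hand side tends to $\nu_\omega(f)$ by martingale convergence, because $\nu_\omega$ is past-measurable. Hence $\nu_\omega=c\,\mu_\omega$ almost surely. Taking $\nu=\mu^a$ gives $\mu^a=c\,\mu$, which forces $c\in\{0,1\}$, since otherwise $\mu^c=(1-c)\mu$ would have atoms. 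Taking $\nu=\mu^{\max}$ gives $\mu^{\max}=Nm\,\mu$, so $\mu_\omega$ is carried by the $N$ maximal atoms and $m=1/N$. No two-point or near-collision analysis is needed.
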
 

\section{Main results}
\label{sec:main}
We begin by recovering the Fokker-Planck equation of the RQF SDE with forcing. We establish that RQF is a rescaled spherical Brownian motion, which, in particular, implies that it is ergodic and the unique invariant distribution is the uniform measure on $\bbS^{n-1}$.

\begin{theorem}[RQF is a Brownian motion]
\label{th:main-distributional}
Let $\rho_t = \law (X_t)$ as in the eq. \eqref{eq:rqf}, then for any $\gamma \in \bbR$, $\rho_t$ is the unique classical solution of the rescaled heat flow on $\bbS^{n-1}$:
\[
\partial_t \rho_t - \frac{1 + \gamma^2}{2} \Delta \rho_t = 0, \quad \rho_t \stackrel{w}{\to} \law (X_0) \text{ as } t \downarrow 0,
\]
where $\Delta$ is the Laplace-Beltrami operator on $\bbS^{n-1}$. In particular, the uniform measure $\bar\rho = \frac{1}{\vol_{\bbS^{n-1}} (\bbS^{n-1})}\vol_{\bbS^{n-1}}$ is the unique ergodic measure of the RQF, where $\vol_{\bbS^{n-1}}$ is the volume measure of the $n-1$-dimensional sphere.
\end{theorem}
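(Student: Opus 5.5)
Our plan is to compute the infinitesimal generator $L$ of \eqref{eq:rqf} explicitly and show that $L = \frac{1+\gamma^2}{2}\Delta$ on $C^\infty(\bbS^{n-1})$. The Fokker--Planck equation and the statements about the invariant measure then follow from standard facts about the heat semigroup on a compact manifold. Since the equation is in Stratonovich form, we write it as
\[
\rmd X_t = \sum_{i\le j} V_{ij}(X_t)\circ \rmd Q^{ij}_t + \gamma\sum_k V_k(X_t)\circ \rmd W^k_t,
\]
with vector fields $V_k(x) = P_x e_k$ and $V_{ij}(x) = P_x E_{ij} x$, where $E_{ij}$ is the symmetric basis matrix matched to the independent components of $Q_t$. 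The generator is then the H\"ormander sum of squares
\[
L = \tfrac12 \sum_{\alpha} \sigma_\alpha^2 V_\alpha^2 ,
\]
where $\sigma_\alpha^2$ is the variance rate of the corresponding driving component.

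\textbf{Forcing part.} This part is classical: $\sum_k V_k^2$ with $V_k = P_x e_k$ is exactly the Laplace--Beltrami operator $\Delta$ on $\bbS^{n-1}$. These are the gradient fields of the coordinate functions $x_k$, and $\sum_k (\nabla x_k)(\nabla x_k)^T = P_x$. One checks this by computing $\sum_k V_k^2 f$ for the restriction of an extension $F$: it gives $\mathrm{tr}(P_x \nabla^2 F P_x) - (n-1)\, x\cdot\nabla F$, which is $\Delta f$. Hence the forcing contributes $\frac{\gamma^2}{2}\Delta$.

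\textbf{Quadratic-form part.} We must show $\frac12\sum_\alpha \sigma_\alpha^2 V_\alpha^2 = \frac12 \Delta$. Note that $\frac{1}{\sqrt2}(B+B^T)$ has diagonal entries of variance rate $2$ and off-diagonal entries of variance rate $1$; this is the normalization of the GOE. The key structural fact is that the law of $Q$ is invariant under conjugation $Q\mapsto OQO^T$ for $O\in O(n)$. Consequently the generator $L_Q$ commutes with rotations and is a second-order operator without zeroth-order term annihilating constants. We also expect it to be symmetric with respect to the uniform measure, because Stratonovich sums of squares of divergence-free fields are symmetric, and we will verify that $\mathrm{div}_{\bbS}(P_x E x) = -n\, x^TEx + \mathrm{tr}E$ has zero mean. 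Either way, a rotation-invariant second-order diffusion operator on $\bbS^{n-1}$ with no drift ambiguity must equal $c\Delta$, so it suffices to compute the constant $c$.

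\textbf{Computing $c$.} This is the main obstacle: the Stratonovich-to-It\^o correction must be tracked carefully. To obtain $c$, we test $L_Q$ on the function $f(x) = x_1$. First, $V_{ij}x_1 = (E_{ij}x)_1 - x_1\, x^TE_{ij}x$. Applying $V_{ij}$ again, summing with the weights $\sigma^2$, and using the GOE identities $\bbE[(Qx)(Qx)^T] = \|x\|^2 I + xx^T$ per unit time and $\bbE[QQ] \propto (n+1)I$, we expect to find $L_Q x_1 = -\frac{n-1}{2}x_1$. This matches $\frac12\Delta x_1$, since coordinate functions are eigenfunctions of $\Delta$ with eigenvalue $-(n-1)$. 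Thus $c = \frac12$. As a cross-check we will also test on a quadratic $x_1x_2$ (eigenvalue $-2n$). This is a useful double check, because a general rotation-invariant operator might a priori contain a first-order part; that part is ruled out by the symmetry argument.

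\textbf{Conclusion.} Summing the two parts gives $L = \frac{1+\gamma^2}{2}\Delta$. By Dynkin (Proposition \ref{prop:dynkin}) applied to smooth test functions, $\rho_t$ solves the adjoint equation, and $\Delta$ is self-adjoint. Uniqueness of weak-to-classical solutions of the heat equation on a compact manifold, together with parabolic smoothing, gives that $\rho_t$ is the unique classical solution with the given initial trace. Finally, the heat semigroup converges to the uniform measure by the spectral gap of $\Delta$. Hence $\bar\rho$ is invariant and it is the only invariant measure, since any invariant measure must be annihilated by $\Delta^*$ and is therefore constant. This proves that $\bar\rho$ is the unique ergodic measure.
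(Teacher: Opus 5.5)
Your proposal is correct and follows the paper's architecture. By independence of $Q$ and $W$ the generator splits as $L_U+\gamma^2L_V$, each piece is identified with $\frac12\Delta$, and self-adjointness of $\Delta$ turns this into the Fokker--Planck statement.

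The genuine difference is in the quadratic part. The paper simply imports $L_U=\frac12\Delta$ from \cite[Theorem 4.3]{engel2026random}, whereas you derive it:
\begin{itemize}
\item $O(n)$-conjugation invariance of the GOE increment makes $L_U$ equivariant.
\item The stabilizer $O(n-1)$ of a point forces the principal symbol to be a multiple of the metric.
\item The same stabilizer admits no nonzero invariant tangent vector, so $L_U=c\Delta$. The reflections are what exclude a $\partial_\theta$ term when $n=2$.
\item The single test $L_Ux_1=-\frac{n-1}{2}x_1$ then fixes $c=\frac12$.
\end{itemize}
That computation checks out. One finds $V_E^2x_1=(E^2x)_1-2(x^TEx)(Ex)_1+3x_1(x^TEx)^2-2x_1|Ex|^2$. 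With the per-unit-time moments $\bbE Q^2=(n+1)I$, $\bbE[(x^TQx)Qx]=2x$ and $\bbE(x^TQx)^2=2$, this gives $\frac12\bigl((n+1)-4+6-2(n+1)\bigr)x_1=-\frac{n-1}{2}x_1$.

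Your route buys a self-contained proof at the cost of a short representation-theoretic step. You also spell out the uniqueness and ergodicity conclusions that the paper compresses into one sentence.

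One side remark is wrong, though not load-bearing. The fields $P_xEx=\nabla_{\bbS}(\tfrac12 x^TEx)$ are gradients, not divergence-free. A zero-mean divergence $\mathrm{tr}\,E-n\,x^TEx$ does not make a sum of squares symmetric. The sum is symmetric only because the averaged first-order correction $\bbE[(\mathrm{tr}\,Q-n\,x^TQx)\,P_xQx]=P_x(2x-2nx)$ vanishes. You do not need this anyway, since equivariance already removes any first-order part.
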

\begin{proof}
    Consider the following Stratonovich diffusions on $\bbS^{n-1}$:
    \[
    \begin{aligned}
        dU_t &=  P_{U_t} \partial Q_t U_t, \\
        dV_t &= P_{V_t} \partial W_t.
    \end{aligned}
    \]
    The process $V_t$ is a natural definition of a Brownian motion on $\bbS^{n-1}$. Moreover, the generator of the process $U_t$ is $L_U = \frac{1}{2}\Delta$, see \cite[Theorem 4.3]{engel2026random}. Since the driving processes $Q_t$ and $W_t$ are independent, we conclude that the generator of the RQF with forcing takes the form
    \[
    Lf = (L_U + \gamma^2 L_V)f = \frac{(1+\gamma^2)}{2}\Delta f, \quad \forall f \in C^\infty.
    \]
    Finally, since the Laplace-Beltrami operator on $\bbS^{n-1}$ is essentially self-adjoint we conclude that $\law X_t$ solves the given Fokker-Planck equation and hence the result.
\end{proof}

\subsection{Random attractor}
We now move to studying the $\omega$-pointwise properties of the solutions to \eqref{eq:rqf}. For this we define an essential component of the proof, namely the auxiliary process $Z_t = \left<X_t, Y_t\right>$, where the couple $(X_t, Y_t)$ solving:
\begin{equation}
\begin{split}
\label{eq:rqf-two}
    \rmd X_t  
    &= P_{X_t}  \partial Q_t  X_t + \gamma P_{X_t} \partial W_t, \quad X_0 \in \bbS^{n-1} , \\
    \rmd Y_t 
    &= P_{Y_t}  \partial Q_t  Y_t + \gamma P_{Y_t} \partial W_t, \quad \ \  Y_0 \in \bbS^{n-1}
\end{split}
\end{equation}
is the two-point motion of the RQF with forcing. 
Note that any two points on a sphere $x, y \in \bbS^{n-1}$ coincide iff $\left<x, y\right> =1$ and are anti-aligned iff $\left<x, y\right> =-1$. 
Therefore, convergence to either an anti-polar configuration or a singleton of a stochastic process can be both characterized in terms of the dynamics of the corresponding process $Z_t$, namely its boundary behaviour at $\pm 1$. In particular, for $\gamma \neq 0$ we will require the following moment-bound on $(1-Z_t)$ to upper bound the distance to the attractor.
\begin{lemma}[Lyapunov function for $\gamma\neq 0$]
\label{lem:lyap-gamma}
    Let $(X_t, Y_t)$ be the two-point motion of the RQF with forcing $\gamma\neq 0$ and let $Z_t = \left<X_t, Y_t\right>$. Then for all $p \in(0, \frac{\gamma^2(n-1)+4}{2(\gamma^2 + 4)})$ and every $z_0 = \left<X_0, Y_0\right> \in [-1, 1]$:
    \[
    \bbE (1-Z_t)^p \leq  (1-z_0)^p e^{-p\alpha t},
    \]
    where 
    \[
    \alpha = \min\{\gamma^2(n-1), \gamma^2(n-1)+ 4 - 2p(\gamma^2 + 4))\},
    \]
in particular, $\alpha >0$.
\end{lemma}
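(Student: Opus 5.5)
The plan is to derive the scalar SDE for $Z_t=\left<X_t,Y_t\right>$ and compute its generator $L_Z$. Then I show that $f(z)=(1-z)^p$ satisfies $L_Zf\le -p\alpha f$ on $[-1,1]$. Dynkin's formula (Proposition~\ref{prop:dynkin}) combined with Gronwall then gives the bound.

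\textbf{Step 1: the generator of $Z_t$.} Since $Q$ and $W$ are independent, the generator of $Z$ splits as $L_Z=L^Q+\gamma^2L^W$, exactly as in the proof of Theorem~\ref{th:main-distributional}.

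For the $Q$-part, the Stratonovich differential is
\[
2Y^T\partial Q X-Z\bigl(X^T\partial Q X+Y^T\partial Q Y\bigr).
\]
From $\Sym^n$-covariance $\rmd\left<u^TQv,w^TQx\right>=(\left<u,w\right>\left<v,x\right>+\left<u,x\right>\left<v,w\right>)\rmd t$, the quadratic variation is $4(1-Z^2)^2\,\rmd t$. The Itô drift $b_Q(z)$ is obtained from the Stratonovich correction and is taken from \cite{engel2026random}, where $L^Q$ is computed for $\gamma=0$.

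For the $W$-part, the martingale term is $\gamma(1-Z)\left<X+Y,\rmd W_t\right>$, with quadratic variation $2\gamma^2(1-Z)^2(1+Z)\,\rmd t$. Its drift is
\[
\gamma^2\bigl(\operatorname{tr}(P_XP_Y)-(n-1)Z\bigr)=\gamma^2(1-Z)(n-2-Z).
\]
This gives
\[
L_Z=2(1-z^2)^2\partial_z^2+b_Q(z)\partial_z+\gamma^2\Bigl[(1-z)^2(1+z)\partial_z^2+(1-z)(n-2-z)\partial_z\Bigr].
\]

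\textbf{Step 2: the Lyapunov inequality.} For $f(z)=(1-z)^p$ one has
\[
f'=-p(1-z)^{p-1},\qquad f''=p(p-1)(1-z)^{p-2}.
\]
Every term of $L_Zf$ carries the factor $p(1-z)^p$. The diffusion coefficients contain $(1-z)^2$, and the drifts contain $(1-z)$, since $b_Q$ vanishes at $z=1$. Hence
\[
L_Zf=-p\,g_p(z)\,(1-z)^p,
\]
where $g_p$ is a polynomial in $z$; in the $W$-part it is affine, e.g.\ $\gamma^2\bigl[(n-2-z)-(p-1)(1+z)\bigr]$. I then check that $g_p$ is minimised over $[-1,1]$ at an endpoint:
\begin{itemize}
\item At $z=1$ the value should be $\gamma^2(n-1)+4-2p(\gamma^2+4)$.
\item At $z=-1$ the value should be $\gamma^2(n-1)$.
\end{itemize}
This yields $g_p\ge\alpha$, with $\alpha>0$ precisely in the stated range of $p$. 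If the $Q$-contribution is not affine in $z$, I will bound it separately by its endpoint values using the explicit form of $b_Q$.

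\textbf{Step 3: regularisation and Dynkin.} The function $(1-z)^p$ is not smooth at $z=1$, and $Z$ may be started at or near $1$. I will therefore apply Dynkin to $f_\varepsilon(z)=(1-z+\varepsilon)^p$, extended to a $C_c^\infty$ function outside $[-1,1]$, for the process $e^{p\alpha t}f_\varepsilon(Z_t)$. The same computation should give
\[
L_Zf_\varepsilon\le -p\alpha f_\varepsilon+C\varepsilon^{p}.
\]
This works because the extra terms are controlled by $\varepsilon(1-z+\varepsilon)^{p-1}\lesssim\varepsilon^p$ times bounded coefficients. Gronwall then gives
\[
\bbE f_\varepsilon(Z_t)\le f_\varepsilon(z_0)e^{-p\alpha t}+C\varepsilon^p t.
\]
Letting $\varepsilon\downarrow0$ by dominated convergence concludes the proof.

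\textbf{Main obstacle.} I expect the main difficulty to be Step 1/2: getting the exact Itô drift $b_Q$ of the $Q$-part with correct constants, and verifying the sign of $g_p$ uniformly on $[-1,1]$. Both the threshold $p<\frac{\gamma^2(n-1)+4}{2(\gamma^2+4)}$ and the form of $\alpha$ hinge on these constants.
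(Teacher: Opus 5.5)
Your proposal is correct and follows the paper's proof essentially step by step: the same closed-form generator of $Z_t$ (with the $Q$-drift $b_Q(z)=-2z(1-z^2)$ taken from \cite{engel2026random}), the regularized test function $(1-z+\varepsilon)^p$, an endpoint minimization producing $\alpha$, and Dynkin plus variation of constants before letting $\varepsilon\downarrow0$. Your hedge about a non-affine $Q$-part is settled by writing $s=1+z$: the full rate is $g_p=\gamma^2(n-1)+s\bigl(2-p(\gamma^2+2s)\bigr)$, which is concave on $s\in[0,2]$ and hence minimized at an endpoint; and for $p\ge1$ (allowed by the stated range when $\gamma^2(n-3)>4$) the remainder is $O(\varepsilon)$ rather than $O(\varepsilon^p)$, which is harmless.
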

The proof of the bound is largely technical and therefore is deferred to Section \ref{sec:proofs}. 
In addition, we will need the following pointwise bound.
\begin{lemma} 
\label{lem:comparison}
For $x, y \in \bbS^{n-1}$ let $z= \left<x, y\right>$, then $\forall p\in(0, \frac{1}{2}]$:
\[
\dist(x, y) \leq \frac{\pi}{2^p}(1-z)^p.
\]
\end{lemma}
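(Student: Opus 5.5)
The plan is to reduce the inequality to a one-variable trigonometric estimate by parametrizing the pair $(x,y)$ through the angle between them. I interpret $\dist$ as the geodesic (great-circle) distance on $\bbS^{n-1}$. If instead the Euclidean chordal distance is meant, the claim follows from the geodesic case, since the chord length never exceeds the arc length.

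\textbf{Step 1 (angle parametrization).} Set $\vartheta := \arccos z \in [0,\pi]$, so that $\dist(x,y) = \vartheta$. The half-angle formula gives
\[
1 - z = 1-\cos\vartheta = 2\sin^2(\vartheta/2),
\]
hence
\[
\frac{\pi}{2^p}(1-z)^p = \pi\,\bigl(\sin(\vartheta/2)\bigr)^{2p}.
\]
The statement is therefore equivalent to showing that for every $\vartheta\in[0,\pi]$ and $p\in(0,\tfrac12]$,
\[
\frac{\vartheta}{\pi} \le \bigl(\sin(\vartheta/2)\bigr)^{2p}.
\]

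\textbf{Step 2 (Jordan-type inequality).} The function $s\mapsto \sin s$ is concave on $[0,\pi/2]$. Its chord from $(0,0)$ to $(\pi/2,1)$ is the line $s\mapsto 2s/\pi$, so concavity gives $\sin s \ge 2s/\pi$ on this interval. Taking $s=\vartheta/2\in[0,\pi/2]$ yields
\[
\sin(\vartheta/2) \ge \frac{\vartheta}{\pi} \in [0,1].
\]

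\textbf{Step 3 (monotonicity in the exponent).} Write $u:=\vartheta/\pi\in[0,1]$. For $0<2p\le 1$ we have $u^{2p}\ge u$ on $[0,1]$, since $t\mapsto u^t$ is non-increasing there. The function $r\mapsto r^{2p}$ is non-decreasing on $[0,1]$, so combining with Step 2,
\[
\bigl(\sin(\vartheta/2)\bigr)^{2p} \ge u^{2p} \ge u = \frac{\vartheta}{\pi},
\]
which is exactly the inequality from Step 1.

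The restriction $p\le\tfrac12$ enters only in Step 3, where the exponent $2p$ must not exceed $1$. I do not expect any genuine obstacle; the only point needing care is fixing which distance $\dist$ denotes, which the remark above handles.
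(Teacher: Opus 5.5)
Your proof is correct and is essentially the paper's argument. The paper combines the chord--arc bound $\dist(x,y)\le\frac{\pi}{2}\|x-y\|$, which is your Jordan inequality $\sin(\vartheta/2)\ge\vartheta/\pi$, with $\|x-y\|=\sqrt{2(1-z)}$ and $(1-z)^{1/2-p}\le 2^{1/2-p}$. The differences are minor: you prove the chord--arc comparison rather than quoting it, and you apply the exponent reduction $r^{2p}\ge r$ to $\vartheta/\pi$ after the Jordan step, whereas the paper effectively applies it to $\sin(\vartheta/2)$.
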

\begin{proof}
First note that the geodesic distance on a sphere is upper bounded by the Euclidean distance $\dist(x, y)\leq \frac{\pi}{2}\|x-y\|$. At the same time, expanding the norm of the difference we get
\[
\|x-y\|^2 = \|x\|^2 + \|y\|^2 - 2\left<x, y\right> = 2(1-z),
\]
and since $x, y\in \bbS^{n-1}$ implies $1 - z\in [0,2]$:
\[
\|x - y\| = \sqrt{2}(1-z)^{\frac{1}{2}}  = \sqrt{2}(1-z)^{\frac{1}{2} -p}(1-z)^{p}\leq 2^{1 - p} (1-z)^{p}.
\]  
Combining the inequalities we get the result.
\end{proof}
We now proceed to characterizing attractors of the RQF with forcing.
\begin{theorem}[Random attractor with forcing]
\label{th:main}
Let $\gamma \neq 0$, then there exists an $\calF_{-\infty, 0}$-measurable map $a:\Omega \to \bbS^{n-1}$ such that $A(\omega) = \{a(\omega)\}$ is the strong forward point attractor of the RQF RDS. Moreover, for all $x_0\in\bbS^{n-1}$:
\begin{equation}
\label{eq:main-estimate}
\bbE(\dist(\varphi(t, \omega, x_0), A(\theta_t\omega))) \leq \pi e^{-\lambda_\gamma t},
\end{equation}
where $\lambda_\gamma = \frac{2\gamma^2}{\gamma^2 +4}(n-1)$.
\end{theorem}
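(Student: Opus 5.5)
The plan is to first construct the singleton attractor from the sample measures, then prove the quantitative estimate \eqref{eq:main-estimate} by combining invariance, the freezing lemma, Lemma~\ref{lem:lyap-gamma} and Lemma~\ref{lem:comparison}, and finally upgrade the convergence to almost sure.

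\textbf{Step 1: existence and singleton structure.} By Proposition~\ref{prop:SDE} the RQF with forcing defines a Markov RDS. By Theorem~\ref{th:main-distributional} the uniform measure $\bar\rho$ is its unique ergodic measure. Proposition~\ref{prop:ExWeakPntAttr} then gives sample measures $\mu_\omega$ and a minimal weak point attractor $\operatorname{supp}\mu_\omega$. To show $\mu_\omega$ is a single atom, I use Lemma~\ref{lem:lyap-gamma} for the two-point motion started from two points drawn independently from $\bar\rho$. Together with Lemma~\ref{lem:comparison}, it gives
\[
\bbE\,\dist\big(\varphi(t,\theta_{-t}\omega,x),\varphi(t,\theta_{-t}\omega,y)\big)\to 0,
\]
uniformly in $x,y$, using that $\theta_{-t}$ preserves $\bbP$. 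Consequently $\iint \dist(x,y)\,\mu_\omega(\rmd x)\mu_\omega(\rmd y)=0$ almost surely, by weak convergence and Fatou. Hence $\mu_\omega=\delta_{a(\omega)}$, with $a$ being $\calF_{-\infty,0}$-measurable by Proposition~\ref{prop:discrete}. If this route is awkward, a backup is to define $a(\omega)$ directly as the pullback limit $\lim_k \varphi(k,\theta_{-k}\omega,x_0)$. By the cocycle property, the freezing lemma and the bound below, consecutive terms differ in $L^1$ by at most $\pi e^{-\lambda_\gamma k}$. This is summable, so Borel--Cantelli gives an almost sure, $\calF_{-\infty,0}$-measurable limit that is independent of $x_0$ and invariant.

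\textbf{Step 2: the estimate.} By invariance, $A(\theta_t\omega)=\{\varphi(t,\omega,a(\omega))\}$. Hence $\dist(\varphi(t,\omega,x_0),A(\theta_t\omega))$ is a bounded functional of the two-point motion started at $(x_0,a(\omega))$. Since $a$ is $\calF_{-\infty,0}$-measurable and the RDS is Markov, Lemma~\ref{lem:freezing} applied to the two-point RDS reduces the expectation to $\sup_{y}\bbE\,\dist(\varphi(t,\omega,x_0),\varphi(t,\omega,y))$. By Lemma~\ref{lem:comparison} and Lemma~\ref{lem:lyap-gamma} this is at most
\[
\frac{\pi}{2^p}(1-z_0)^p e^{-p\alpha t}\le \pi e^{-p\alpha t},
\]
using $1-z_0\le 2$. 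It remains to choose $p$. The two branches of $\alpha$ coincide at $p=2/(\gamma^2+4)$. This value satisfies $p\le 1/2$, as Lemma~\ref{lem:comparison} requires. It also satisfies $p<\frac{\gamma^2(n-1)+4}{2(\gamma^2+4)}$, which is equivalent to $\gamma^2(n-1)>0$ and therefore holds since $\gamma\neq 0$. With this choice, $p\alpha=\frac{2\gamma^2(n-1)}{\gamma^2+4}=\lambda_\gamma$.

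\textbf{Step 3: almost sure (strong) attraction.} This is the main obstacle, since the estimate only gives convergence in $L^1$. My first attempt is to note that the proof of Lemma~\ref{lem:lyap-gamma} presumably shows $L f\le -p\alpha f$ for $f=(1-z)^p$ (or a regularization of it). Then $e^{p\alpha t}(1-Z_t)^p$ is a nonnegative supermartingale, which converges almost surely. Hence $(1-Z_t)^p\to0$ almost surely, and applying this with the second point frozen at $a(\omega)$ via the freezing lemma gives almost sure convergence. As a fallback, I would use Borel--Cantelli at integer times, which is available because the bound is summable. I would then control the oscillation on $[k,k+1]$ by a maximal inequality for this supermartingale.
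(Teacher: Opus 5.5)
Your proposal is correct and follows essentially the same route as the paper. The ingredients match: you collapse the sample measures of Proposition~\ref{prop:ExWeakPntAttr} to a Dirac mass using two-point contraction, you take $p=2/(\gamma^2+4)$ in Lemma~\ref{lem:lyap-gamma} together with Lemma~\ref{lem:comparison}, and you apply the freezing Lemma~\ref{lem:freezing} with initial point $(x_0,a(\omega))$ both for the rate and for upgrading supermartingale convergence to almost sure attraction. The differences are minor: you kill $\iint \dist\,\rmd\mu_\omega\,\rmd\mu_\omega$ directly from the $L^1$ rate rather than from almost sure collapse plus Baxendale's Proposition~2.6, and your supermartingale $e^{p\alpha t}(1-Z_t)^p$ already follows from Lemma~\ref{lem:lyap-gamma} and the Markov property (no need to revisit its proof) and even gives an almost sure exponential rate.
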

\begin{proof}
The proof is structured as follows. First we prove almost sure collapse $\dist (X_t, Y_t) \to 0$ using the Lemma \ref{lem:lyap-gamma}. Then we relate the two-point dynamics to the weak forward random point attractor of the corresponding RDS. Finally, we apply the freezing Lemma \ref{lem:freezing} to establish almost sure convergence and the estimate \eqref{eq:main-estimate}.

\emph{Step 1: Lyapunov function.} 
Applying Lemma \ref{lem:lyap-gamma} with $p = \frac{2}{\gamma^2 +4}$, we obtain $\alpha(\gamma, n, p)= \gamma^2(n-1)$ and therefore:
\[
\bbE (1-Z_t)^p \leq  (1-z_0)^p e^{-\lambda_\gamma t},
\]
where $\lambda_\gamma = \frac{2\gamma^2}{\gamma^2 +4}(n-1)$. Since the bound holds for an arbitrary choice of the initial condition $z_0\in[-1, 1]$, the stochastic process $\xi_t = (1-Z_t)^p$ is a non-negative supermartingale. Moreover, it is uniformly bounded by construction, namely $\xi_t \leq 2^p$. Thus, by Doob's supermartingale convergence theorem, the convergence $\xi_t \to 0$ holds $\Omega$-almost surely. Hence, using the pointwise comparison $\dist(x, y) \leq \frac{\pi}{2^p}(1-z)^p$ from Lemma \ref{lem:comparison}, we conclude that
$\dist(X_t, Y_t) \to 0$ almost surely for every $X_0, Y_0 \in \bbS^{n-1}$.

\emph{Step 2: Random attractor.} By Proposition \ref{prop:ExWeakPntAttr}, there exist the sample measures $\mu_\omega$ and a weak point attractor $A(\omega)$. Since the sphere is a compact manifold and the RQF is ergodic, by \cite[Proposition 2.6]{baxendale1991statistical}:
\[
(P_t)^*(\bar \rho \times \bar \rho) \to
\bbE(\mu_\omega(\rmd x) \times \mu_\omega(\rmd y))
\]
where $P^*_t$ is the adjoint of the semigroup of the two-point process \eqref{eq:rqf-two}. At the same time, for the test function $f(x, y) = \dist(x, y)$ using $\dist(X_t, Y_t) \to 0$ we obtain
\[
\lim_{t\to \infty}\int P_t f(x, y) \rmd(\bar \rho \times \bar \rho ) =  0 = \int \dist(x, y)\bbE (\mu_\omega(\rmd x) \times \mu_\omega(\rmd y)).
\]
and thus $\mu_\omega$ is $\Omega$-a.s. supported on a single point, which we denote by $a(\omega)$. Applying Proposition \ref{prop:discrete}, we conclude that the set 
$A(\omega) :=\{a(\omega)\}$ is the minimal weak point attractor of the RQF with forcing and $a(\omega)$ is measurable with respect to the past.

\emph{Step 3: Almost sure convergence.} First note that by the $\varphi$-invariance of the random attractor we necessarily have $A(\theta_t\omega) = \{a(\theta_t\omega)\} = \{\varphi(t, \omega, a(\omega))\}$. Thus, to upgrade to an almost sure convergence we argue as follows. Consider the Markov RDS $\hat \varphi$ corresponding to the two-point process of the RQF: 
\[
\hat \varphi(t, \omega, (x, y)) = (\varphi(t, \omega, x), \varphi(t, \omega, y)),
\]
and the function $\psi: C(\bbR_+, \bbS^{n-1} )\times C(\bbR_+, \bbS^{n-1} ) \to \{0,1\}$ defined as
\[
\psi(u, v) := 1\{\dist(u(t), v(t)) \to 0\},
\]
with the corresponding function $\Psi: \bbS^{n-1}\times \bbS^{n-1} \to [-1, 1]$:
\[
\Psi(x, y):= \bbE(\psi(\varphi(\cdot, \omega, x), \varphi(\cdot, \omega, y))) = \bbP\left[\dist(X_t, Y_t) \to 0\Big| X_0 = x, Y_0 = y\right].
\]
From step 1 we know that $\Psi((x, y)) \equiv 1$. Consider the random $\calF_{-\infty, 0}$-measurable point $\beta(\omega) = (x_0, a(\omega))$, then applying freezing Lemma \ref{lem:freezing} to the RDS $\hat \varphi$, we deduce
\[
\begin{aligned}
\MoveEqLeft \bbP[\dist(\varphi(t, \omega, x_0), \varphi(t, \omega, a(\omega)) \to 0]= \bbE[\Psi(\beta(\omega))] \geq \inf_{x, y} \Psi(x, y) = 1.
\end{aligned}
\]

\emph{Step 4: Convergence rate.} Arguing analogously to step 4 we obtain the expected rate of convergence to the random attractor. In particular, consider the same $\hat \varphi$ and $\beta(\omega)$ as in the previous step and define a family of functions $\kappa_t: C(\bbR_+, \bbS^{n-1}) \times C(\bbR_+, \bbS^{n-1})$ for $t\in \bbR_+$:
\[
\kappa_t (u, v)= \dist (u(t), v(t)),
\]
which is bounded by construction. The correspondning function $\Psi$ then takes the form
\[
\begin{aligned}
\Psi(x, y)&:=\bbE[\kappa_t(\hat \varphi(\cdot, \omega, (x, y)))] = \bbE[\kappa_t(\varphi(\cdot, \omega, x), \varphi(\cdot, \omega, y))] 
\\&= \bbE\left[\dist(X_t, Y_t) \Big| X_0 = x, Y_0 = y\right].
\end{aligned}
\]
Then, applying the freezing Lemma \ref{lem:freezing} and using the exponential convergence from Lemma \ref{lem:lyap-gamma} we obtain
\[
\begin{aligned}
 \bbE[\dist(\varphi(t, \omega, x_0), \varphi(t, \omega, a(\omega))]= \bbE[\Psi(\beta(\omega))] \leq \sup_{x, y} \Psi((x, y)) = \pi e^{-\lambda_\gamma t},
\end{aligned}
\]
where the bound 
\[
\sup_{x, y} \Psi((x, y)) \leq \sup_{z\in[-1, 1]}(1-z)^p\frac{\pi}{2^p} e^{-\lambda_\gamma t} = \pi e^{-\lambda_\gamma t}.
\]
follows from Lemmas \ref{lem:lyap-gamma} and \ref{lem:comparison}.
\end{proof}

\begin{remark}[Contraction rate] We remark that the top Lyapunov exponent classically provides local exponential convergence rate for points in the $\omega$-dependent neighborhood of the attractor; namely bounds of the form
\[
\dist(\varphi(t, \omega, x), A(\theta_t\omega)) \leq C(\omega, x)e^{\Lambda_{\max}t}\dist(x, A(\omega)), \quad \forall x:\dist (x, A(\omega)) < r(\omega, x),
\]
which do not generally imply exponential convergence to the attractor in expectation. Notably, for the classical Brownian flow on $\bbS^{n-1}$ given by 
\begin{equation}
\label{eq:bm}
\rmd X_t = P_{X_t}\partial W_t,
\end{equation}
\cite[Theorem 5.3(ii)]{baxendale1986asymptotic} establishes almost sure exponential convergence with the rate $e^{-\frac{1}{2}(n-1) t}$. We also remark that our rate $-\lambda_\gamma/\gamma^2$ recovers the result of \cite{baxendale1986asymptotic}
\[
-\frac{\lambda_\gamma}{\gamma^2} = -\frac{2(n-1)}{4+\gamma^2} \to -\frac{1}{2}(n-1)
\]
in the limit $\gamma \to 0$. 
\end{remark}
\begin{remark}[Rate for large $\gamma$]
\label{rem:large-gamma}
For large $\gamma$ the second branch of $\alpha(\gamma, n, p)$ becomes optimal in Lemma \ref{lem:lyap-gamma}, giving the optimal value $\hat p = \frac{\gamma^2(n-1) +4}{4(\gamma^2 +4)}$. At the same time, for $n\geq 4$ and large enough $\gamma^2$ we have $\hat p \geq \frac{1}{2}$. Thus we can take $p =\frac{1}{2}$ directly estimating $\sqrt{1-z}$ to get the contraction rate
\[
\lambda_\gamma = p\alpha(p, \gamma, n)= \frac{1}{2}(\gamma^2(n-1) + 4-\gamma^2 - 4) = \frac{(n-2)}{2}\gamma^2
\]
for $\gamma \gg 1$.
\end{remark}

\begin{remark}[Generalizations] 
\label{rem:generalization}
We expect similar estimates to hold for a larger class of isotropic diffusions on $\bbS^{n-1}$. In particular, for the classical formulation of a spherical Brownian motion \eqref{eq:bm} the proof applies in a simplified form since the $\gamma$-independent components are not present. In particular, taking $p \uparrow \frac{1}{2}$ and following the steps of the proof above, yields the following estimate
\[
\bbE\dist(\tilde \varphi(t, \omega, x_0), A(\theta_t\omega)) \leq C_0e^{-\frac{1}{2}(n-2) t},
\]
which is consistent with the rate for the large $\gamma$ regime in Remark \ref{rem:large-gamma}.
We also note that the rate in expectation is weaker than the almost sure rate $e^{-\frac{1}{2}(n-1) t}$ established in \cite[Theorem 5.3(ii)]{baxendale1986asymptotic}. 

To cover the case $n = 2$, it is also possible to choose $p\in (0, \frac{1}{2})$ still giving exponential contraction and therefore almost sure convergence. For example, taking $p = \frac{1}{4}$ we get the estimate 
\[
\bbE\dist(\tilde \varphi(t, \omega, x_0), A(\theta_t\omega)) \leq C_0e^{-\frac{1}{8} t}.
\]
\end{remark}  

\subsection{Metastable synchronization}
\label{sec:main-meta}
Finally, we consider the small forcing regime $\gamma \to 0$ and establish the multi-scale behaviour of the two-point process \eqref{eq:rqf-two}. In this Section we slightly change the notation and use the upper index $\gamma$ to denote the RQF process with the forcing $\gamma$, namely $X_t^\gamma$ and, analogously, $\varphi^\gamma$ for the corresponding RDS. In particular, $X_t^0$ denotes the RQF without forcing as studied in \cite{engel2026random}. Since the dynamics for small $t$ is dominated by quadratic component, the proof relies on a coupling argument between the processes $X_t^\gamma$ and $X_t^0$. 

First, arguing analogously to the Theorem \ref{th:main}, we establish convergence rates for the non-forced RQF $\varphi^0$.
\begin{proposition}
    [Exponential convergence for $\gamma = 0$]
\label{lem:gamma-0}
Let $\gamma = 0$, then there exists a random set $A^Q(\omega) = A^Q(\omega^Q)$ consisting of two anti-polar points
\[
A^Q(\omega) = \{a^Q(\omega), -a^Q(\omega)\},
\]
where $a^Q(\omega)$ is an $\calF^Q_{-\infty, 0}$-measurable random point and the RQF RDS satisfies:
\begin{equation}
\label{eq:estimate-two}
\bbE(\dist(\varphi^0(t, \omega, x_0), A^Q(\theta_t\omega))) \leq \frac{\pi}{\sqrt{2}} e^{-\frac{1}{2}t},
\end{equation}
for all $x_0 \in \bbS^{n-1}$
\end{proposition}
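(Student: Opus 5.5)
We follow the four-step scheme of the proof of Theorem \ref{th:main}. The one change is the Lyapunov function. For $\gamma=0$ the pair $\{x,-x\}$ is invariant, since $\varphi^0(t,\omega,-x)=-\varphi^0(t,\omega,x)$: the equation $\rmd X=P_X\partial Q_t X$ is odd in $X$. So the right observable is $1-Z_t^2$, not $1-Z_t$, where $Z_t=\langle X_t^0,Y_t^0\rangle$. The target intermediate bound is
\[
\bbE\,(1-Z_t^2)^{1/2}\le (1-z_0^2)^{1/2}e^{-\frac12 t}\le e^{-\frac12 t}.
\]
This is Lemma \ref{lem:lyap-gamma} with $\gamma=0$, $p=\tfrac12$ and rate $p\alpha=\tfrac12$, but for the symmetrised function.

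\emph{Step 1 (generator of $Z$ and the Lyapunov bound).} Convert the two-point Stratonovich system \eqref{eq:rqf-two} with $\gamma=0$ to Itô form. Then compute $\rmd Z_t$ using $Q_t=\frac1{\sqrt2}(B_t+B_t^T)$. The martingale part is $2x^T\rmd Q\,y-z(x^T\rmd Q\,x+y^T\rmd Q\,y)$, and its quadratic variation is an explicit multiple of $(1-z^2)^2$, computed via $\bbE[(u^T\rmd Q v)(w^T\rmd Q s)]$. This gives a one-dimensional generator
\[
L f(z)=b(z)f'(z)+\tfrac12\sigma^2(z)f''(z),
\]
where $b$ is odd and both $b$ and $\sigma^2$ vanish at $z=\pm1$. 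Dimension-dependent terms should cancel, because the one-point motion is $\frac12\Delta$ for every $n$. Following the AI-suggested trick in Lemma \ref{lem:lyap-gamma}, apply $L$ to the regularised function $f_\varepsilon(z)=(1-z^2+\varepsilon)^{1/2}$, which is smooth on $[-1,1]$, so Dynkin's formula (Proposition \ref{prop:dynkin}) applies. The aim is to show $Lf_\varepsilon\le -\tfrac12 f_\varepsilon+o_\varepsilon(1)$ uniformly in $z$. Grönwall then gives the bound, and monotone convergence lets $\varepsilon\downarrow0$.

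\emph{Main obstacle.} The computation of $b$ and $\sigma^2$, and the sign of $Lf_\varepsilon$ near $z=\pm1$ where the regularisation matters, is the step I expect to be hardest. If $p=\tfrac12$ is not admissible, I would redo it with a general $p$ and optimise, as in Lemma \ref{lem:lyap-gamma}. The stated constant $\frac{\pi}{\sqrt2}$ and rate $e^{-t/2}$ strongly suggest $p=\tfrac12$ and $\alpha=1$ work.

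\emph{Step 2 (pointwise comparison).} For the set $\{y,-y\}$ we have
\[
\dist(x,\{y,-y\})\le \tfrac{\pi}{2}\min(\|x-y\|,\|x+y\|)=\tfrac{\pi}{2}\sqrt{2(1-|z|)}\le\tfrac{\pi}{\sqrt2}(1-z^2)^{1/2},
\]
using $1-|z|\le 1-z^2$. This is the analogue of Lemma \ref{lem:comparison}.

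\emph{Step 3 (anti-polar attractor).} The process $(1-Z_t^2)^{1/2}$ is a bounded nonnegative supermartingale, so $|Z_t|\to1$ a.s. for all initial pairs. By Theorem \ref{th:main-distributional} with $\gamma=0$ the system is ergodic. Propositions \ref{prop:ExWeakPntAttr} and \ref{prop:discrete}, together with the Baxendale limit, then give sample measures with $\int\dist(x,\{y,-y\})\,\bbE(\mu_\omega\times\mu_\omega)=0$. Hence $\operatorname{supp}\mu_\omega$ lies in a pair $\{a,-a\}$. Oddness of the flow and invariance of $\bar\rho$ under $x\mapsto-x$ give $\mu_\omega$ symmetric, so it equals $\frac12(\delta_{a^Q}+\delta_{-a^Q})$. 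Moreover $a^Q$ is $\calF^Q_{-\infty,0}$-measurable, since only $Q$ drives the flow.

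\emph{Step 4 (rate).} Apply the freezing Lemma \ref{lem:freezing} to the two-point RDS with $\beta(\omega)=(x_0,a^Q(\omega))$ and $\kappa_t(u,v)=\dist(u(t),\{v(t),-v(t)\})$. Invariance gives $A^Q(\theta_t\omega)=\{\pm\varphi^0(t,\omega,a^Q(\omega))\}$. Combining Steps 1 and 2 then gives $\sup_{x,y}\Psi\le\frac{\pi}{\sqrt2}e^{-t/2}$, which is \eqref{eq:estimate-two}.
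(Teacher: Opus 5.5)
\textbf{There is a genuine gap in Step 1.} Your overall architecture is the paper's: the symmetrised observable $1-Z_t^2$, a regularised Lyapunov function with Dynkin and Gr\"onwall, comparison with $\dist(x,\{y,-y\})$, sample measures, and the freezing lemma. But Step 1 cannot work with $p=\tfrac12$, and the first inequality in your target display is false.

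For $\gamma=0$ the generator is $L_0f=-2z(1-z^2)f'+2(1-z^2)^2f''$, and a direct computation gives
\[
L_0(1-z^2)^p=-4p\,(1-2pz^2)\,(1-z^2)^p .
\]
At $p=\tfrac12$ this equals $-2(1-z^2)^{3/2}$. The decay coefficient $2(1-z^2)$ vanishes at $z=\pm1$, so no bound $L_0f_\varepsilon\le-\tfrac12 f_\varepsilon+o_\varepsilon(1)$ uniform in $z$ is possible.

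In fact $\bbE(1-Z_t^2)^{1/2}\le(1-z_0^2)^{1/2}e^{-t/2}$ fails whenever $0<1-z_0^2<\tfrac14$. Both sides coincide at $t=0$, but the right derivative of the left side is $-2(1-z_0^2)^{3/2}$, which is strictly larger than $-\tfrac12(1-z_0^2)^{1/2}$. The linearisation at $z=1$ shows why. There $w=1-Z_t$ behaves like $\rmd w\approx 4w\,\rmd t+4w\,\rmd\beta_t$, whose $p$-th moment decays at rate $4p(1-2p)$, and this rate is zero at $p=\tfrac12$. So the constants in the statement do not point to $p=\tfrac12$.

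\textbf{Repair.} Your fallback of optimising over $p$ is the right fix, and it is exactly what the paper does. The rate $4p\min_z(1-2pz^2)=4p(1-2p)$ is maximised at $p=\tfrac14$, where $1-\tfrac{z^2}{2}\ge\tfrac12$. With $h_\delta=(1-z^2+\delta)^{1/4}$ one gets $L_0h_\delta\le-\tfrac12h_\delta+O(\delta^{1/4})$, and hence
\[
\bbE(1-Z_t^2)^{1/4}\le(1-z_0^2)^{1/4}e^{-t/2}.
\]
The constant $\tfrac{\pi}{\sqrt2}$ comes from $p=\tfrac14$. The paper obtains it from $\min(\|x-y\|,\|x+y\|)\le\sqrt{\|x-y\|\,\|x+y\|}=\sqrt2\,(1-z^2)^{1/4}$. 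Alternatively, keep your Step 2, which is correct, and note $(1-z^2)^{1/2}\le(1-z^2)^{1/4}$. This gives $\sup_{z_0}\bbE(1-Z_t^2)^{1/2}\le e^{-t/2}$, which is all Step 4 needs.

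With this change, Steps 3 and 4 go through as you describe, using the supermartingale $(1-Z_t^2)^{1/4}$. The paper takes the existence of $A^Q$ directly from \cite[Theorem 4.8]{engel2026random}. Your rederivation via the symmetry of the sample measures is a valid self-contained alternative.
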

The proof of Proposition \ref{lem:gamma-0} is analogous to the proof of Theorem \ref{th:main} and is postponed to Section \ref{sec:proofs}. We are now moving to the main result.

\begin{theorem}[Metastability of the two-point motion]
\label{th:main-meta}
For any $\gamma \in \bbR$ there exists a random set 
\[
A^Q(\omega^Q) = \{a^Q(\omega^Q), - a^Q(\omega^Q)\},
\]
where $a^Q(\omega^Q)$ is an $\calF^Q_{-\infty, 0}$-measurable function,
and the RQF RDS $(\varphi^\gamma, \theta)$ satisfies 
    \begin{equation}
     \label{eq:estimate-meta}   
    \bbE \dist(\varphi^\gamma(t, \omega, x_0), A^Q(\theta_t\omega)) \leq \frac{\pi}{\sqrt{2}} e^{-\frac{1}{2}t} + \frac{\pi\sqrt{n-1}}{4}|\gamma| \sqrt{e^{4t}-1},
    \end{equation}
    for all $x_0 \in \bbS^{n-1}$.
\end{theorem}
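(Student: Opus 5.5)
The plan is a synchronous coupling between the forced process $X^\gamma_t = \varphi^\gamma(t,\omega,x_0)$ and the unforced process $X^0_t = \varphi^0(t,\omega^Q,x_0)$, both driven by the same realization of $Q_t$ and started at the same point $x_0$. We take $A^Q(\omega^Q)$ to be the random set from Proposition \ref{lem:gamma-0}. The triangle inequality gives
\[
\bbE \dist(X^\gamma_t, A^Q(\theta_t\omega)) \leq \bbE \dist(X^0_t, A^Q(\theta_t\omega)) + \bbE \dist(X^\gamma_t, X^0_t).
\]
The first term is bounded by $\frac{\pi}{\sqrt2}e^{-t/2}$ via \eqref{eq:estimate-two}, since $A^Q$ depends only on $\omega^Q$. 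The whole task is therefore a coupling bound of the form
\[
\bbE\dist(X^\gamma_t,X^0_t)\le \frac{\pi\sqrt{n-1}}{4}|\gamma|\sqrt{e^{4t}-1}.
\]

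For the coupling term, use $\dist(x,y)\le \frac{\pi}{2}\|x-y\|$ (as in Lemma \ref{lem:comparison}) and Jensen:
\[
\bbE\dist \le \frac{\pi}{2}\sqrt{\bbE\|X^\gamma_t-X^0_t\|^2}.
\]
Since both processes lie on the sphere, $\|X^\gamma_t-X^0_t\|^2 = 2(1-Z_t)$ with $Z_t=\langle X^\gamma_t, X^0_t\rangle$. It thus suffices to show
\[
\bbE(1-Z_t)\le \frac{(n-1)\gamma^2}{8}(e^{4t}-1),
\]
because then $\frac{\pi}{2}\sqrt{2\cdot\frac{(n-1)\gamma^2}{8}(e^{4t}-1)} = \frac{\pi\sqrt{n-1}}{4}|\gamma|\sqrt{e^{4t}-1}$, exactly the claimed constant.

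To obtain this, convert both Stratonovich equations to Itô form and apply Itô's formula to $Z_t$. This requires computing:
\begin{itemize}
\item the Itô drift of each component, i.e.\ the Stratonovich corrections from $Q$ (giving the $\frac12\Delta$ generator) and from $W$ (a drift $-\frac{\gamma^2(n-1)}{2}X^\gamma$);
\item the cross-variation between the $Q$-martingale parts of $X^\gamma$ and $X^0$.
\end{itemize}
Only $X^\gamma$ sees $W$, so there is no $W$-cross-variation. The expected outcome is a linear differential inequality
\[
\frac{d}{dt}\bbE(1-Z_t)\le c_1\,\bbE(1-Z_t) + \frac{\gamma^2(n-1)}{2}\,\bbE Z_t,
\]
with $c_1\le 4$ coming from the $Q$-terms. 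Then $Z_t\le1$ and Grönwall with $1-Z_0=0$ give
\[
\bbE(1-Z_t)\le \frac{\gamma^2(n-1)}{2}\int_0^t e^{4(t-s)}\,\rmd s=\frac{\gamma^2(n-1)}{8}(e^{4t}-1).
\]
Dynkin's formula (Proposition \ref{prop:dynkin}) justifies taking expectations, as the coefficients are smooth and bounded on the compact sphere.

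The main obstacle is the computation of the $Q$-contributions to $\rmd Z_t$. The terms from $P_X\partial Q X$ and $P_Y\partial QY$, and especially their cross quadratic covariation $\sum \langle P_X \rmd Q X, P_Y \rmd Q Y\rangle$, must be expanded using the covariance structure of the symmetric Brownian matrix, $\bbE[\rmd Q_{ij}\rmd Q_{kl}] = (\delta_{ik}\delta_{jl}+\delta_{il}\delta_{jk})\rmd t$. I would first try to express everything as a polynomial in $z=\langle x,y\rangle$, expecting something like $-c(1-z^2)z$ or $(1-z)(\dots)$, and then bound it crudely by $4(1-z)$ uniformly on $[-1,1]$. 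If the exact constant $4$ does not come out directly, I would still use a crude bound of the form $|\text{drift}|\le C(1-z)$, which only affects the exponential factor. The $\gamma$-term is a pure source term bounded by $\frac{\gamma^2(n-1)}{2}$, and the rest is straightforward.
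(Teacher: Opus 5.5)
Your proposal is correct and follows the paper's proof essentially verbatim. It uses the same synchronous coupling of $\varphi^\gamma$ with $\varphi^0$, the triangle inequality with Proposition \ref{lem:gamma-0} for the first term, and a Gr\"onwall bound on the second moment of $X^0_t-X^\gamma_t$; you track $1-Z_t=\frac12\|V_t\|^2$ rather than $\|V_t\|^2$, which only changes a factor of two. The computation you flag as the main obstacle closes with exactly the constant you need. The cross-variation of the $Q$-parts is $(n-3)Z_t+2Z_t^3$, so the It\^o drift of $1-Z_t$ is $2Z_t(1-Z_t^2)+\frac{\gamma^2(n-1)}{2}Z_t$. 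Hence $c_1=2Z_t(1+Z_t)\le 4$, which is precisely the content of Lemma \ref{lem:vt}.
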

\begin{remark}[The time scale]
    The two terms of the upper bound \eqref{eq:estimate-meta} balance at $t \in \bbR_+$ solving
    \[
    e^{-\frac{1}{2}t} = \frac{\sqrt{n-1}}{4}|\gamma| \sqrt{e^{4t}-1},
    \]
    which corresponds to 
    \[
    t =\frac{2}{5}\log |\gamma|^{-1} -\frac{1}{5}\log n +O(1)
    \]
    as $\gamma \to 0$. This estimate supports the statement of the 'meta'-attractor $A^Q$ being attractive on the time scale of order $\log |\gamma|^{-1}$.
\end{remark}
\begin{proof}
    We consider the coupling of the RQFs with and without forcing $(\varphi^0, \varphi^\gamma)$ and by triangle inequality obtain
    \[
    \begin{aligned}
    \bbE \dist(\varphi^\gamma(t, \omega, x_0), A^Q( \theta_t\omega)) &\leq \bbE \dist(\varphi^0(t, \omega, x_0), A^Q( \theta_t\omega)) \\
    &\qquad+ \bbE \dist(\varphi^0(t, \omega, x_0), \varphi^\gamma(t, \omega, x_0)) := A(t) + B(t).
    \end{aligned}
    \]
    By Proposition \ref{lem:gamma-0}, the first term decays exponentially $A(t) \leq \frac{\pi}{\sqrt{2}}e^{-\frac{1}{2}t}$, and thus we only need to bound the distance between $\varphi^0$ and $\varphi^\gamma$. 
    
    Let $V_t := X^0_t - X_t^\gamma, \ V_0 = 0$, then $V_t$ solves the following SDE
    \begin{equation}
 \label{eq:vt}
    \rmd V_t = \frac{n-1}{2}\left(\gamma^2X^\gamma_t-V_t \right)\rmd t -\gamma P_{X_t^\gamma}\rmd W_t + (P_{X_t^0}\rmd Q_t X_t^0 -P_{X_t^\gamma}\rmd Q_t X_t^\gamma),
    \end{equation}
    and applying Ito formula to $f(x) = \|x\|^2$ we conclude that the evolution of $\|V_t\|^2$ follows the dynamics
    \begin{equation}
    \label{eq:th-meta-v}
    \rmd\|V_t\|^2 = (\gamma^2(n-1) +\lambda_V\|V_t\|^2)\rmd t+ \rmd M_t,
   \end{equation}
    where 
    \[
    \lambda_V = 2\left<X_t^\gamma, X_t^0\right>^2 +2\left<X_t^\gamma, X_t^0\right> -\frac{\gamma^2(n-1)}{2} \leq 4,
    \]
    and $\bbE M_t =0$ is a martingale, see Lemma \ref{lem:vt} for details.
    Integrating Eq. \eqref{eq:th-meta-v} and taking expectation on both sides then gives
    \[
    \bbE \|V_t\|^2 \leq \|V_0\|^2 +\int_0^t 4\bbE\|V_s\|^2 \rmd s +\gamma^2(n-1)t.
    \]
     Applying standard Gronwall argument we obtain the upper bound
     \[
     \bbE \|V_t\|^2 \leq \frac{\gamma^2(n-1)}{4}(e^{4t} -1) \leq \frac{\gamma^2(n-1)}{4}(e^{4t} -1).
     \]
     Finally, note that by Cauchy-Schwartz inequality $B(t) \leq \sqrt{\bbE \|V_t\|^2}$. Therefore, using $\dist(x, y) \leq \frac{\pi}{2}\|x-y\|$ and combining the estimates for $A(t)$ and $B(t)$ we get the result.    
    \end{proof}
    
\subsection{Auxiliary lemmas}
\label{sec:proofs}
We begin with characterizing the dynamics of the process $Z_t$, which is used to establish the moment bound in Lemma \ref{lem:lyap-gamma} and Proposition \ref{lem:gamma-0}.
\begin{lemma}[Dynamics of $Z_t$] 
\label{lem:z}
Let $(X_t, Y_t)$ be the two-point motion of the RQF with forcing and let $Z_t = \left<X_t, Y_t\right>$. Then $Z_t$ solves the Ito stochastic differential equation
\[
\begin{aligned}
\rmd Z_t &= \left(\gamma^2(1-Z_t)(n-2-Z_t) -2Z_t(1-Z_t^2) \right)\rmd t \\
&\qquad - Z_t(X_t^T\rmd Q_t X_t + Y_t^T\rmd Q_t Y_t) + 2X_t^T\rmd Q_t Y_t + \gamma(1-Z_t)(X_t + Y_t)^T\rmd W_t.
\end{aligned}
\]
In particular, the infinitesimal generator of the process $Z_t$, takes the form
\begin{align}
(L_\gamma f)(z)  &= (1-z)\left( \gamma^2(n-2 - z) - 2z(1+z)\right)\partial_z f \notag \\
&\qquad+ (1-z)^2(1+z)(2(1+z) +\gamma^2)\partial_{zz}f, \label{eq:generator}
\end{align}
for any $f\in C^{\infty}([-1, 1])$. 
\end{lemma}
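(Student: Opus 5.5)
The plan is a direct Itô computation. I will convert \eqref{eq:rqf-two} to Itô form, apply the Itô product rule to $Z_t=\left<X_t,Y_t\right>$, and then read off the generator from the drift and the quadratic variation of the martingale part.

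\emph{Step 1: Itô form of the one-point motion.} The Stratonovich correction for the $Q$-part is known from \cite{engel2026random}; it is also consistent with the drift in \eqref{eq:vt}. The $W$-part is the standard spherical Brownian motion correction. Together they give
\[
\rmd X_t = P_{X_t}\rmd Q_t X_t + \gamma P_{X_t}\rmd W_t - \frac{(n-1)(1+\gamma^2)}{2}X_t\,\rmd t,
\]
and the same equation holds for $Y_t$. I will verify the $Q$-correction directly with the covariance structure $\bbE[\rmd Q_{ij}\rmd Q_{kl}]=(\delta_{ik}\delta_{jl}+\delta_{il}\delta_{jk})\rmd t$. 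In particular $\sum_{j,k}\rmd Q_{ij}\rmd Q_{jk}$ contributes $(n+1)\delta_{ik}\rmd t$, and this combines with the projection terms to give $-\frac{n-1}{2}X\,\rmd t$.

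\emph{Step 2: product rule.} I write $\rmd Z=\left<\rmd X,Y\right>+\left<X,\rmd Y\right>+\rmd[X,Y]$.

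The martingale parts are immediate. Using $Y^TP_X=Y^T-zX^T$, the $Q$-terms give $2X^T\rmd Q Y - z(X^T\rmd QX+Y^T\rmd QY)$ by symmetry of $\rmd Q$. The $W$-terms give $\gamma(Y-zX+X-zY)^T\rmd W=\gamma(1-z)(X+Y)^T\rmd W$.

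The drift has three contributions:
\begin{itemize}
\item The Itô corrections give $-(n-1)(1+\gamma^2)z\,\rmd t$.
\item The $W$-covariation gives $\gamma^2\operatorname{tr}(P_XP_Y)\rmd t=\gamma^2(n-2+z^2)\rmd t$.
\item The $Q$-covariation is $\sum_i (P_X\rmd QX)_i(P_Y\rmd QY)_i$. Contracting with the covariance above and using $|X|=|Y|=1$, I expect it to equal $(n-1)z + 2z(1-z)\ldots$, that is, whatever is needed so the total collapses to $\left(\gamma^2(1-z)(n-2-z)-2z(1-z^2)\right)\rmd t$.
\end{itemize}
On the $\gamma^2$ side this is a clean check: $\gamma^2(n-2+z^2-(n-1)z)=\gamma^2(1-z)(n-2-z)$. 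So only the $\gamma$-independent part requires the careful tensor contraction.

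\emph{Step 3: generator.} The drift gives the first-order coefficient in \eqref{eq:generator} directly. For the second-order coefficient I compute $\rmd[Z]$, using the independence of $Q$ and $W$ so that there are no cross terms.
\begin{itemize}
\item The $W$-part gives $\gamma^2(1-z)^2|X+Y|^2\rmd t=2\gamma^2(1-z)^2(1+z)\rmd t$.
\item The $Q$-part is the variance of $a^T\rmd Q b$-type expressions. For symmetric $Q$ one has $\rmd[u^T Q v, s^TQr]=\left(\left<u,s\right>\left<v,r\right>+\left<u,r\right>\left<v,s\right>\right)\rmd t$. Applying this to the combination $2X^TQY - z(X^TQX+Y^TQY)$ should give $4(1-z)^2(1+z)^2\rmd t$.
\end{itemize}
Halving the sum yields $(1-z)^2(1+z)(2(1+z)+\gamma^2)$, which matches \eqref{eq:generator}.

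The main obstacle is the bookkeeping in the $\gamma$-independent covariation terms, namely the drift correction and $\rmd[Z]$ from $Q$. There I will organize everything through the single bilinear identity above, to avoid index errors.
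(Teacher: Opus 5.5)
Your proposal is correct and follows the paper's proof. Both convert to Itô form, apply the product rule to $\langle X_t,Y_t\rangle$, and split the result into $\gamma^2$ terms, which you compute exactly as the paper does, and $\gamma$-independent terms, which the paper imports from \cite{engel2026random} as $q_{\gamma=0}(z)=(n-3)z+2z^3$ and $\Sigma_{\gamma=0}(z)=4(1-z^2)^2$ while you derive them from your covariance identity (your $\Sigma_{\gamma=0}$ is right). The one step to fix is the drift covariation, which you should compute rather than infer from the target: applying your identity with $u=P_Xe_i$, $v=X$, $s=P_Ye_i$, $r=Y$ and summing over $i$ gives $z\operatorname{tr}(P_XP_Y)+\langle P_XY,P_YX\rangle=(n-3)z+2z^3=(n-1)z-2z(1-z^2)$, not the $(n-1)z+2z(1-z)\ldots$ you guessed, and this yields exactly the drift $-2z(1-z^2)$.
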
 
\begin{proof}
    We start with reformulating the RQF \eqref{eq:rqf} in the Ito form:
    \[
    \rmd X_t = -\frac{(n-1)(1+\gamma^2)}{2}X_t\rmd t + P_{X_t}\rmd Q_tX_t + \gamma P_{X_t}\rmd W_t,
    \]
    which follows from \cite[Lemma 4.9]{engel2026random} and the classical representation of a Brownian motion on $\bbS^{n-1}$.
    Then, applying Ito's lemma to the function $f(x, y) = \left<x, y\right>$ we obtain
    \begin{align}
 \rmd Z_t &= \rmd f(X_t, Y_t)= X^T_t \rmd Y_t + Y^T_t\rmd X_t + \rmd [X_t, Y_t]  \notag \\
 &= -(n-1)(1+\gamma^2 )Z_t \rmd t + \rmd [X_t, Y_t] + \rmd M_t^\gamma, \label{eq:lem32}
\end{align}
 where the martingale $M_t^\gamma$ is given by the following equation
 \[
 \rmd M_t^\gamma = -Z_t(X_t^T\rmd Q_t X_t + Y_t^T\rmd Q_t Y_t)+ 2X_t\rmd Q_t Y_t + \gamma(1-Z_t)(X_t + Y_t)^T\rmd W_t.
 \]
 Calculating the quadratic covariation $[X_t, Y_t] = \int_0^t q(Z_s) \rmd s$ we obtain the following expression for the function $q$:
 \[
 \begin{aligned}
 q(z) &= q_{\gamma=0}(z) + \gamma^2 \sum_{i, j} P_x^{i, j}P_y^{i, j} =q_{\gamma=0}(z) + \gamma^2\sum_{i, j}(\delta_{i, j} - x^ix^j)(\delta_{i, j} - y^iy^j) \\
 &=nz - 3z +2z^3 + \gamma^2(n +z^2 - 2),
  \end{aligned}
 \]
 where 
 \[
 q_{\gamma=0}(z) = nz - 3z +2z^3
 \]
 is the quadratic covariation of the quadratic noise component of the RQF as follows from the proof of \cite[Lemma 4.9]{engel2026random}. Plugging the expression into \eqref{eq:lem32} we obtain the cumulative drift
 \[
 -(n-1)(1+\gamma^2)z + nz - 3z +2z^3 + \gamma^2(n +z^2 - 2) = \gamma^2(1-z)(n-2 - z) -2z(1-z^2).
 \]
 Finally, to obtain \eqref{eq:generator} we calculate the covariance of the process $Z_t$:
 \[
 \begin{aligned}
 \Sigma(z) &= \Sigma_{\gamma = 0}(z) + \gamma^2\sum_{i}(1-z)^2(x^i + y^i)^2 =4(1 - z^2)^2 + 2\gamma^2(1-z)^2(1 + z)
 \end{aligned}
 \]
 where $\Sigma_{\gamma = 0}(z) =4(1 - z^2)^2$ is the covariance of the quadratic part derived in \cite[Theorem 4.8]{engel2026random}, and hence the result.
\end{proof}

We are now ready to prove Lemma \ref{lem:lyap-gamma} and Proposition \ref{lem:gamma-0}.

\begin{proof}[Proof of Lemma \ref{lem:lyap-gamma}]
    By Lemma \ref{lem:z}, the generator of the process $Z_t$ takes the closed form and is therefore decoupled from the dynamics of $(X_t, Y_t)$. We consider the test function $g_{p, \delta} = (1-z +\delta)^p$ for $\delta, p \in (0, 1)$. Note that $g_{p, \delta}$ is smooth on $[-1, 1]$ and since the process is restricted to the interval $[-1, 1]$ the discontinuity of $g'_{p, \delta}$ at $z = 1+\delta$ does not play any role. For $g_{p, \delta}$ we calculate
\[
\begin{aligned}
\MoveEqLeft (L_\gamma g_{p, \delta})(z)= -p(1-z)\left( \gamma^2(n-2 - z) - 2z(1+z)\right)(1-z +\delta)^{p-1} \\
&\quad + p(p-1)(1-z)^2(1+z)(2(1+z) +\gamma^2)(1-z+\delta)^{p-2} \\
&:=-p\Lambda_p(n, \gamma, z)g_{p, \delta} + R_p(\delta, \gamma, z),
\end{aligned}
\]
where 
\[
\begin{aligned}
\Lambda_p(n, \gamma, z) &= \gamma^2(n-2 - z) - 2z(1+z) - (p-1)(1+z)(2(1+z) +\gamma^2) \\
&=\gamma^2(n-1) + (z+1)\left(2 - p(\gamma^2 + 2(1+z))\right) , \\
R_p(n, \delta, \gamma, z) &= p\delta(1-z+\delta)^{p-1}(\gamma^2(n-2-z)-2z(1+z)) \\
&\qquad - p(p-1)(1-z+\delta)^{p-2} (1+z)(2(1+z) +\gamma^2)\delta(\delta + 2(1-z))\\
&=:A + B
\end{aligned}
\]
Let $s = 1+z$ and rewrite $\Lambda_p$ as
\[
\Lambda_p(n, \gamma, z) = \gamma^2(n-1) + s(2-p(\gamma^2 + 2s)).
\]
Since it is a concave function of $s$ for any given interval it is minimized at one of the end points. Since $z\in [-1, 1]$, and therefore $s\in [0, 2]$ we conclude that
\[
\min_{z\in [-1, 1]} \Lambda_p(n, \gamma, z) =\min\{\gamma^2(n-1), \gamma^2(n-1)+ 4 - 2p(\gamma^2 + 4))\} := \alpha(\gamma, n, p),
\]
implying that for every $p \in(0, p^*)$, where $p^* = \frac{\gamma^2(n-1)+4}{2(\gamma^2 + 4)}$, $\alpha(\cdot, \cdot, p)$ is a strictly positive function.  
At the same time, using $z\in [-1, 1]$, $p\in (0, p^*)$ and $p^*<1$ we obtain the following upper bounds:
\[
\begin{aligned}
    |A| &\leq \delta(1-z+\delta)^{p-1}(\gamma^2(n-1)+4) \leq \delta^p(\gamma^2(n-1) +4) \\
    |B| &\leq \delta^2(1-z+\delta)^{p-2} 2(4 +\gamma^2) \leq 2\delta(1-z+\delta)^{p-1}(4 +\gamma^2) \leq 2\delta^p(4 +\gamma^2),
\end{aligned}
\]
hence,
\[
|R_p(n, \delta, \gamma, \cdot )| \leq \delta^p(\gamma^2n +12) := \delta^p\beta(\gamma, n) 
\]
on $[-1, 1]$.
Applying Dynkin's formula (Proposition \ref{prop:dynkin}) to $g_{p, \delta}$ for all $\delta>0$ we obtain
\[
\bbE g_{p, \delta}(Z_t) \leq g_{p, \delta}(z_0) +\bbE\left(\int_0^t (-p\alpha(\gamma, n, p)g_{p, \delta}(Z_s) + \delta^p\beta(\gamma, n))\rmd s\right).
\]
Using a standard variation of constants estimate we therefore conclude that
\[
\bbE g_{p, \delta}(Z_t) \leq g_{p, \delta}(z_0)e^{-p\alpha(\gamma, n, p)t} + \delta^p\frac{\beta(\gamma, n)}{p\alpha(\gamma, n, p)}.
\]
Since the inequality $g_{p, 0}(z) \leq g_{p, \delta}(z)$ holds for all $\delta >0$ pointwise on $[-1, 1]$, we can bound the $p$-th moment of $(1-Z_t)$ by:
\[
\bbE (1-Z_t)^p \leq \bbE (1-Z_t +\delta )^p \leq g_{p, \delta}(z_0)e^{-p\alpha(\gamma, n, p)t} + \delta^p\frac{\beta(\gamma, n)}{p\alpha(\gamma, n, p)}
\]
Taking limit $\delta \to 0$ concludes the proof.
\end{proof}

\begin{proof}[Proof of Proposition \ref{lem:gamma-0}]
    First note that the existence of the strong forward point attractor $A^Q(\omega)$ follows directly from \cite[Theorem 4.8]{engel2026random}. The proof of the estimate \eqref{eq:estimate-two} follows the structure of the proof of Theorem \ref{th:main} with the Lyapunov function $h_{p, \delta}(z) = (1-z^2 +\delta)^p$. In particular the generator of $Z_t^0$ takes the form
    \[
    (L_0f)(z) = -2z(1-z^2)\partial_z f(z) +2(1-z^2)^2\partial_{zz}f(z),
    \]
    for all $f\in C^\infty([-1, 1])$. Plugging in $h_{p, \delta}$ we obtain the expression
    \[
    \begin{aligned}
    L_0h_{p, \delta} &= 4pz^2(1-z^2)(1-z^2+\delta)^{p-1} \\
    &\qquad+4p(1-z^2)^{2}(1-z^2+\delta)^{p-2}((p-1)2z^2 -(1-z^2 +\delta)) \\
    &=4p (1-z^2+\delta)^{p}(z^2 + (p-1)2z^2 - (1-z^2)) \\
    &\qquad +4p\delta(1-z^2+\delta)^{p-2}\left((1-2z^2)(1-z^2+\delta)+2(1-p)z^2[2(1-z^2)+\delta]\right)\\
    &:=-4p \hat \Lambda_p(z) h_{p, \delta} + \hat R_p(z, \delta),
    \end{aligned}
    \]
    where
    \[
    \begin{aligned}
        \hat \Lambda_p(z) &= (1-2pz^2)\\
       \hat R_p(z, \delta) &=4p\delta(1-z^2+\delta)^{p-2}\left((1-z^2)(1+2(1-2p)z^2) + \delta(1-2pz^2)\right),
    \end{aligned}
    \]
    and for $p\in (0, \frac{1}{2}]$
    \[
    |\hat R_p| \leq 4p\delta(1-z^2+\delta)^{p-2}(3(1-z^2) +\delta) \leq 12p\delta^p.
    \]
    Choosing $p = \frac{1}{4}$ we get $\hat \Lambda_{\frac{1}{4}}(z) \geq \frac{1}{2}$ and reproducing the variation of constants estimate from Lemma \ref{lem:lyap-gamma} we bound the growth of the random variable $(1-Z_t^2)^{\frac{1}{4}}$ by 
    \[
    \bbE (1-Z_t^2)^\frac{1}{4} \leq (1-z_0^2)^\frac{1}{4}e^{-\frac{1}{2}t}.
    \]
    Since the points converge to either polar or anti-polar configuration, we define
    \[
    \xi_t := \min(\|X_t - Y_t\|, \|X_t+ Y_t\|),
    \]
    and, using $\|X_t \pm Y_t\| = \sqrt{2(1\pm z)}$, obtain
    \[
    \xi_t \leq \sqrt{\|X_t +Y_t\|\|X_t - Y_t\|} = \sqrt[4]{4(1-Z_t)(1+Z_t)} = \sqrt{2}(1-Z^2_t)^{\frac{1}{4}}, 
    \]
    which immediately yields 
    \[
    \bbE \xi_t \leq \sqrt{2}e^{-\frac{1}{2}t}.
    \]
    Similarly to the proof of Theorem \ref{th:main}, applying the freezing argument and using $\dist(x, y) \leq \frac{\pi}{2}\|x-y\|$ we conclude that
    \[
    \bbE \dist(\varphi(t, \omega, x_0), A^Q(\theta_t\omega)) \leq \frac{\pi}{2}\bbE \xi_t  \leq \frac{\pi}{\sqrt{2}}e^{-\frac{1}{2}t}.
    \]
    Where
    \[
    A^Q(\theta_t\omega)= \{\varphi^0(t, \omega, a^Q(\omega)), -\varphi^0(t, \omega, a^Q(\omega))\}
    \]
    by invariance of the random attractor.
\end{proof}
Finally, we derive the dynamics of the process $\|V_t\|^2$ used in the proof of Theorem \ref{th:main-meta}.
\begin{lemma}[Dynamics of $\|V_t\|^2$]
 \label{lem:vt}   
 Consider $X_t^0, X_t^\gamma$ and $V_t$ as in Eq. \eqref{eq:vt}, and let $U_t = \|V_t\|^2$, then $U_t$ solves
    \[
    \rmd U_t = b(X_t^0, X_t^\gamma, U_t)\rmd t + \rmd M_t,
    \]
    where $M_t$ is a martingale with $\bbE M_t =0$ and
    \[
    b(X_t^0, X_t^\gamma, U_t) = \gamma^2(n-1) + U_t\left(2\left<X_t^\gamma, X_t^0\right>^2 +2\left<X_t^\gamma, X_t^0\right> -\frac{\gamma^2(n-1)}{2}\right).
    \]
\end{lemma}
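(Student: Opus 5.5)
The plan is to apply Itô's formula directly to $U_t=\|V_t\|^2$, using the Itô forms of the two coupled equations. Then I will rewrite everything in terms of the single scalar $z_t:=\left<X_t^\gamma, X_t^0\right>$ via the identity $U_t = \|X^0_t\|^2+\|X^\gamma_t\|^2-2z_t = 2(1-z_t)$.

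\emph{Setting up the two equations.} As in the proof of Lemma \ref{lem:z}, the Itô form of the RQF with forcing $\gamma$ is
\[
\rmd X^\gamma_t = -\tfrac{(n-1)(1+\gamma^2)}{2}X^\gamma_t\rmd t + P_{X^\gamma_t}\rmd Q_t X^\gamma_t + \gamma P_{X^\gamma_t}\rmd W_t ,
\]
and $X^0_t$ satisfies the same equation with $\gamma=0$, driven by the same $Q_t$. Subtracting gives the drift of $V_t$:
\[
-\tfrac{n-1}{2}X^0_t+\tfrac{(n-1)(1+\gamma^2)}{2}X^\gamma_t=\tfrac{n-1}{2}\left(\gamma^2X^\gamma_t - V_t\right),
\]
which is exactly \eqref{eq:vt}.

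\emph{Itô formula.} Itô's formula gives $\rmd U_t = 2\left<V_t,\rmd V_t\right> + \rmd[V]_t$, where $[V]_t=\sum_i[V^i]_t$.

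The drift part of $2\left<V_t,\rmd V_t\right>$ is
\[
-(n-1)U_t+(n-1)\gamma^2\left<V_t,X^\gamma_t\right>.
\]
Since $\left<V_t,X^\gamma_t\right>=z_t-1=-U_t/2$, this equals $-(n-1)U_t-\tfrac{\gamma^2(n-1)}{2}U_t$.

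For the quadratic variation, the $W$ and $Q$ noises are independent, so there are no cross terms between them.
\begin{itemize}
\item The $W$ part contributes $\gamma^2\,\mathrm{tr}\,P_{X^\gamma_t}=\gamma^2(n-1)$.
\item The $Q$ part is $\rmd[A^0]+\rmd[A^\gamma]-2\,\rmd[A^0,A^\gamma]$, with $A^x:=\int P_{X}\rmd Q X$.
\end{itemize}
Each diagonal term has trace $(n-1)\rmd t$. This is forced by $\|X\|\equiv 1$: the condition $2\left<X,\rmd X\right>+\rmd[X]=0$ with Itô drift $-\tfrac{n-1}{2}X$ implies it. The cross term is precisely the function $q_{\gamma=0}(z)=nz-3z+2z^3$ from the proof of Lemma \ref{lem:z}. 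That lemma computes it for any two points driven by the same $Q$, and the $\gamma$-dependence of $X^\gamma$ does not affect the $Q$-covariation.

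\emph{Collecting terms.} The total drift is
\[
\gamma^2(n-1) + 2(n-1) - 2(nz-3z+2z^3) - 2(n-1)(1-z) - \tfrac{\gamma^2(n-1)}{2}U_t .
\]
The $n$-dependent terms cancel, leaving $4z-4z^3 = 4z(1-z)(1+z)=2z(1+z)\,U_t$. This gives exactly the claimed
\[
b=\gamma^2(n-1)+U_t\left(2z_t^2+2z_t-\tfrac{\gamma^2(n-1)}{2}\right).
\]

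\emph{Martingale part.} The remaining term $\rmd M_t$ collects the stochastic integrals
\[
2\left<V_t, P_{X^0_t}\rmd Q_tX^0_t-P_{X^\gamma_t}\rmd Q_tX^\gamma_t\right> \quad\text{and}\quad -2\gamma\left<V_t,P_{X^\gamma_t}\rmd W_t\right>.
\]
Their integrands are bounded, since everything lives on the compact sphere and $\|V_t\|\le 2$. Hence $M_t$ is a true square-integrable martingale with $\bbE M_t=0$.

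\emph{Main obstacle.} The only delicate step is the $Q$ cross-variation: one has to check carefully that $\sum_i \rmd[(A^0)^i,(A^\gamma)^i]$ equals $q_{\gamma=0}(z_t)\rmd t$ with the normalization $Q=\tfrac{1}{\sqrt2}(B+B^T)$. I would do this by taking the covariance $\rmd[Q^{ij},Q^{kl}]=(\delta_{ik}\delta_{jl}+\delta_{il}\delta_{jk})\rmd t$, off the diagonal, and contracting with the projections, reusing \cite[Lemma 4.9]{engel2026random}. The diagonal value $n-1$ then serves as a consistency check on that normalization.
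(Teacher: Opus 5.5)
Your proposal is correct and follows essentially the same route as the paper. Both arguments apply It\^o's formula to $\|V_t\|^2$ and use $\left<V_t, X_t^\gamma\right> = -\frac{1}{2}U_t$. Both split the quadratic variation into the $W$-part $\gamma^2(n-1)$ and the $Q$-part, with diagonal traces $n-1$ and cross term $(n-3)z+2z^3$, and both use bounded integrands for the martingale. The only differences are cosmetic: you obtain the diagonal value $n-1$ from the constraint $\|X\|\equiv 1$ rather than from the $a=b$ case of the covariance identity, and your final simplification $4z-4z^3=2z(1+z)U_t$ avoids the dropped factor $2$ in an intermediate line of the paper's drift computation.
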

\begin{proof}
Applying Ito's lemma to $f(x) = \|x\|^2 =\left<x, x\right>$ we obtain
\[
\rmd f(V_t) = 2\left<V_t,\rmd V_t\right> + \rmd [V_t, V_t]
\]
The first term gives
\[
\begin{aligned}
2\left<V_t,\rmd V_t\right> = (n-1)(\gamma^2 \left<V_t, X_t^\gamma\right> - U_t)\rmd t + \rmd M_t = -(n-1)U_t\left(1+\frac{1}{2}\gamma^2\right)\rmd t + \rmd M_t,
\end{aligned}
\]
where we used
\[
\left<V_t, X_t^\gamma\right> = \left<X_t^0- X_t^\gamma, X_t^\gamma\right> = \left<X_t^0, X_t^\gamma\right> - 1 = -\frac{1}{2}\left<X_t^0- X_t^\gamma, X_t^0- X_t^\gamma\right> = -\frac{1}{2}\|V_t\|^2.
\]
Calculating the quadratic covariation $[V_t, V_t] = \int_0^t q(s) \rmd s$  we obtain
\[
\begin{aligned}
q(s) &= \frac{1}{2}\sum_{i, j, k}\left(P_{X_s^\gamma}^{i, j}(X_s^\gamma)^k - P_{X_s^0}^{i, j}(X_s^0)^k + P_{X_s^\gamma}^{i, k}(X_s^\gamma)^j - P_{X_s^0}^{i, k}(X_s^0)^j\right)^2 + \gamma^2 \sum_{i, j} (P_{X_s^\gamma}^{i, j})^2\\
&:= I + II.
\end{aligned}
\]
The second term is essentially the trace of the projection matrix and therefore $II = \gamma^2(n-1)$.

Let $\sigma_{i, j}^k(x) := \frac{1}{\sqrt{2}}(P_{x}^{i, j}x^k + P_{x}^{i, k}x^j)$, then, simplifying the first term, we obtain
\begin{equation}
\label{eq:vt-1}
I = \sum_{i, j, k}(\sigma_{i, j}^k(X_t^\gamma))^2 + \sum_{i, j, k}(\sigma_{i, j}^k(X_t^0))^2 - 2\sum_{i, j, k}\sigma_{i, j}^k(X_t^0)\sigma_{i, j}^k(X_t^\gamma),
\end{equation}

where for all $a, b \in \bbS^{n-1}$
\[
\sum_{i, j, k}\sigma_{i, j}^k(a)\sigma_{i, j}^k(b) = (n-3)\left<a, b\right> +2\left<a, b\right>^3.
\]
as follows from the proof of \cite[Lemma 4.9]{engel2026random}. In particular, for $a = b$ we obtain
\[
\sum_{i, j, k}(\sigma_{i, j}^k(a))^2 = (n-3) + 2 = n-1.
\]
Plugging it into \eqref{eq:vt-1} and using $\|V_t\|^2 = 2- 2\left<X_t^0, X_t^\gamma\right>$ we obtain
\[
\begin{aligned}
I &= 2(n-1) - 2(n-3)\left<X_t^0, X_t^\gamma\right> - 4\left<X_t^0, X_t^\gamma\right>^3 \\
&=\|V_t\|^2((n-1) +2\left<X_t^0, X_t^\gamma\right>^2 + 2\left<X_t^0, X_t^\gamma\right>),
\end{aligned}
\]
and hence the total drift takes the form
\[
\begin{aligned}
   b(X_t^0, X_t^\gamma, U_t) &= \gamma^2(n-1)-(n-1)U_t\left(1+\frac{1}{2}\gamma^2\right) + U_t((n-1) +2\left<X_t^0, X_t^\gamma\right>^2 + \left<X_t^0, X_t^\gamma\right>) \\
   &= \gamma^2(n-1)+ U_t \left(2\left<X_t^\gamma, X_t^0\right>^2 +2\left<X_t^\gamma, X_t^0\right> -\frac{\gamma^2(n-1)}{2}\right).
\end{aligned}
\]
Finally note that the process $M_t$ satisfies $M_0 = 0$ and is given by
\[
    \rmd M_t = \sum_i h^i_t \rmd W_t^i + \sum_{i, j} g^{i, j}_t \rmd B_t^{i, j}, \qquad |h^i_t|, |g^{i, j}_t| < C,
    \]
    where all $h^i_t, g^{i, j}_t$ are adapted processes. Thus $M_t$ is a martingale and $\bbE M_t =0$. 
\end{proof}
\section{Multi-cluster and multiscale dynamics}
\label{sec:multi}
\begin{figure}[t]
    \centering
    \begin{subfigure}{0.43\textwidth}
    \includegraphics[width=0.49\linewidth]{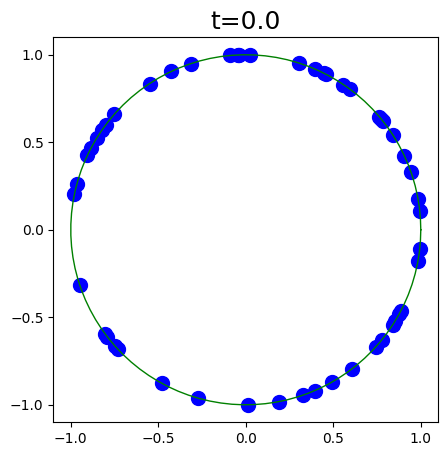}
    \includegraphics[width=0.49\linewidth]{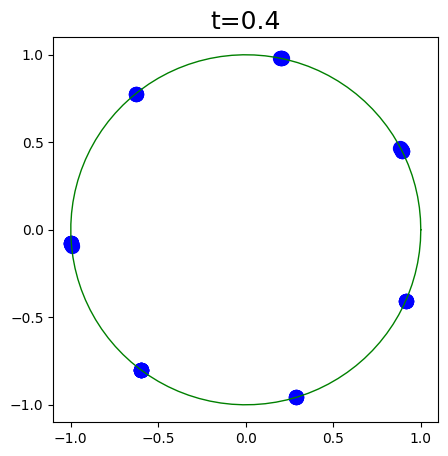}
    \includegraphics[width=0.49\linewidth]{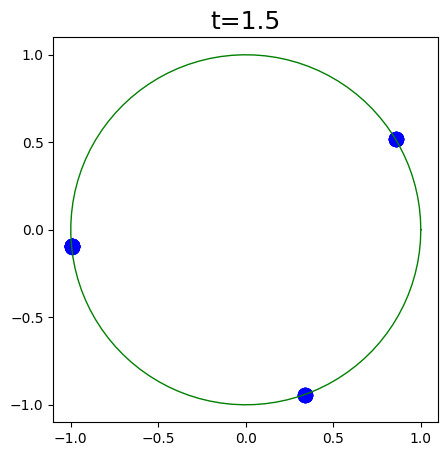}
    \includegraphics[width=0.49\linewidth]{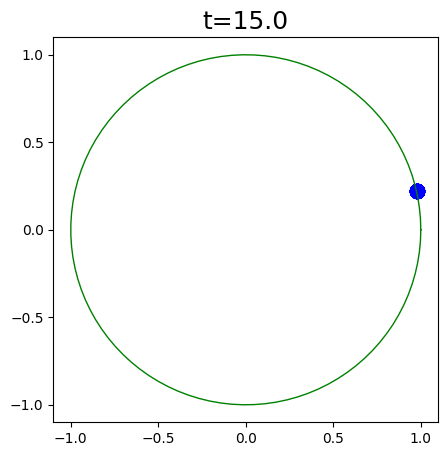}
    \end{subfigure}
    \begin{subfigure}{0.56\textwidth}
    \includegraphics[width=\linewidth]{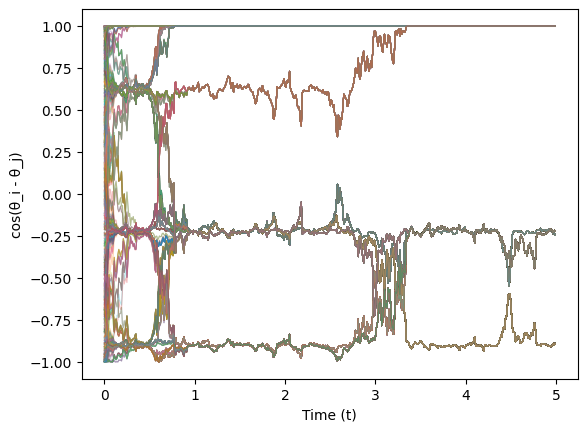}
    \end{subfigure}
    \caption{On the left: the ensemble of solutions of Eq. \eqref{eq:multi} driven by the same noise from different initial conditions. The system gradually approaches each of the random attractors for $k = 7, 3$ and $1$. On the right: the dynamics of pairwise scalar products between the particles showing the formation and dissolvement of the clusters.}
    \label{fig:multi}
\end{figure}
In this section we discuss how the same approach can be extended to the cases of multi-clustered attractors and multi-scale dynamics. From now on we only consider the dynamics on the circle, namely the case $n =2$. Formulating a Brownian motion on a sphere of $n>2$ dimensions with a discrete random attractor consisting of $k> 2$ points is generally an open question.

\subsection{Multiple clusters}
To construct a system with a random attractor consisting of $k>1$ points we consider the angular representation of the RQF on the circle as in \cite[Section 5.1]{engel2026random}. In particular, consider the Stratonovich SDE with $k$-harmonic coefficients of the form
\begin{equation}
\label{eq:multi-k}
\rmd\phi_t = \cos(k\phi_t)\partial B^1_t + \sin(k\phi_t)\partial B^2_t,
\end{equation}
where $B^{1,2}_t$ are independent Brownian motions. Notice that for any $k\in \bbN$ the variable $\psi_t = k\phi_t$ satisfies the equation
\begin{equation}
\label{eq:multi-1}
\rmd\psi_t = k\cos \psi_t\partial B^1_t + k\sin \psi_t \partial B^2_t.
\end{equation}
The model \eqref{eq:multi-1} is the angular representation of the (rescaled) classical Brownian motion as in Eq. \eqref{eq:bm} on $\bbS^1$, implying that so is \eqref{eq:multi-k}. Moreover, the RDS corresponding to \eqref{eq:multi-1} is fully synchronizing, namely its random point attractor is almost surely a singleton \cite[Section 5]{baxendale1986asymptotic}. Therefore, the random attractor of the $k$-harmonic model \eqref{eq:multi-k} consists of at most $k$ points and the following extension of \cite[Proposition 5.1]{engel2026random} holds.
\begin{proposition}[Harmonic model on $\bbS^{1}$]
\label{prop:harmonic}
Let $k\in\bbN$ and $\phi_t$ be the harmonic model on $\bbR / 2 \pi \bbZ \cong \bbS^1$ defined in  Eq.~\eqref{eq:multi-k}, then 
\begin{itemize}
    \item[(i)] $\phi_{t}$ is a Brownian motion
    \item[(ii)] the minimal weak random point attractor $A(\omega)$ of the corresponding RDS is supported on exactly $k$ equidistant $\calF_{-\infty, 0}$-measurable random points:
\[
A(\omega) = \left\{a(\omega)+\frac{2m\pi}{k}: m = 0\dots k-1\right\}.
\]
\end{itemize}
\end{proposition}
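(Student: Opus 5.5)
For (i) I will compute the generator directly. Converting \eqref{eq:multi-k} to Itô form, the Stratonovich correction is $\frac12\big(\cos(k\phi)\partial_\phi\cos(k\phi)+\sin(k\phi)\partial_\phi\sin(k\phi)\big)=\frac{k}{2}\big(-\cos\sin+\sin\cos\big)(k\phi)=0$. The quadratic variation rate is $\cos^2(k\phi)+\sin^2(k\phi)=1$. Hence the generator is $\frac12\partial_{\phi\phi}$, so $\phi_t$ is a standard Brownian motion on $\bbR/2\pi\bbZ$. In particular the uniform measure is the unique ergodic measure, and Proposition \ref{prop:ExWeakPntAttr} yields sample measures $\mu_\omega$ and a minimal weak point attractor $A(\omega)=\operatorname{supp}\mu_\omega$.

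For (ii) I will first use the symmetry of the flow. Since the coefficients are $2\pi/k$-periodic, $\varphi(t,\omega,x+2\pi/k)=\varphi(t,\omega,x)+2\pi/k$ by pathwise uniqueness. The uniform measure is invariant under rotation $R$ by $2\pi/k$, so $\mu_\omega=\lim\varphi(t,\theta_{-t}\omega,\cdot)^*\bar\rho$ is $R$-invariant. Therefore $A(\omega)$ is invariant under $R$, and it contains at least the $k$ points of one orbit $\{a+2m\pi/k\}$.

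For the upper bound I will pass to $\psi=k\phi$, which solves \eqref{eq:multi-1}. This is a time-changed classical Brownian flow on $\bbS^1$. By \cite[Section 5]{baxendale1986asymptotic} (equivalently, by the $p=\frac14$ estimate of Remark \ref{rem:generalization}), any two $\psi$-trajectories satisfy $\dist(\psi_t,\psi'_t)\to0$ almost surely. Since the map $\phi\mapsto k\phi$ is $k$-to-one, this means that for any two initial points $\phi_0,\phi_0'$ the difference $\phi_t-\phi'_t$ converges almost surely to the set $\frac{2\pi}{k}\bbZ$. I will then repeat Step 2 of the proof of Theorem \ref{th:main} with the test function $f(x,y)=\dist(kx,ky)$ on $\bbS^1\times\bbS^1$. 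Baxendale's formula $(P_t)^*(\bar\rho\times\bar\rho)\to\bbE(\mu_\omega\times\mu_\omega)$ together with dominated convergence gives $\int \dist(kx,ky)\,\mu_\omega(\rmd x)\mu_\omega(\rmd y)=0$ almost surely. Hence $\operatorname{supp}\mu_\omega$ lies in a single coset $a(\omega)+\frac{2\pi}{k}\bbZ$, which has at most $k$ points. Combined with the $R$-invariance, this gives exactly the $k$ equidistant points.

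Proposition \ref{prop:discrete} then provides that the atoms are $\calF_{-\infty,0}$-measurable; alternatively, $a(\omega)$ can be taken as a measurable selection from the support. The delicate step is the passage from almost sure two-point synchronization modulo $2\pi/k$ to the statement about supports. This requires a test function that is continuous on the product, which $\dist(kx,ky)$ is, and the weak convergence result of \cite[Proposition 2.6]{baxendale1991statistical}. Both are used exactly as in Theorem \ref{th:main}, so I expect no real obstacle beyond checking the equivariance under $R$.
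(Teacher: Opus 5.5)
Your proposal is correct and follows essentially the paper's route: part (i) by computing the generator (you work directly in $\phi$ rather than in $\psi=k\phi$, which is immaterial), and part (ii) by the reparametrization $\psi_t=k\phi_t$. This reduces the problem to the synchronizing classical Brownian flow of \cite{baxendale1986asymptotic} and bounds the attractor by $k$ points. The paper defers the remaining details to the proof of \cite[Proposition 5.1]{engel2026random}, whereas you spell them out via the $2\pi/k$-rotation equivariance (which gives exactly $k$ equidistant points) and via the two-point argument of Step 2 of Theorem \ref{th:main} with the test function $\dist(kx,ky)$.
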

\begin{proof}
    The infinitesimal generator of the process \eqref{eq:multi-1} is $L = \frac{k^2}{2}\partial^2_{\psi\psi}$, changing the variables back to $\phi$ gives \emph{(i)}. To show \emph{(ii)} note that the proof of \cite[Proposition 5.1]{engel2026random} applies with the reparametrization $\psi_t = k\phi_t$.
\end{proof}
At the same time, according to the Remark \ref{rem:generalization}, we expect the model \eqref{eq:multi-1} to satisfy at least
\[
\bbE\dist(\psi(t, \omega, x_0), A(\theta_t\omega)) \leq C_0e^{-\frac{k^2}{8} t}.
\]
The fact $\dist(\phi^1, \phi^2) = k^{-1}\dist(\psi^1, \psi^2)$ therefore implies exponential convergence to the random attractor in expectation for the $k$-harmonic model at the same rate $e^{-\frac{k^2}{8} t}$.
 \subsection{Multiple time-scales}
To construct a model with multiple time scales note that the RQF with forcing in angular coordinates is written as
\begin{equation*}
\rmd\phi_t = (\cos(2\phi_t)\partial B^1_t + \sin(2\phi_t)\partial B^2_t) + \gamma (\cos(\phi_t)\partial B^3_t + \sin(\phi_t)\partial B^4_t),
\end{equation*}
where the harmonic noise with $k=1$ is treated as perturbation of a harmonic model with $k=2$. We emphasize that the perturbation corresponds to the lower harmonic and that the multiscale dynamics would not be present in the opposite case, as follows from the estimates in Theorem \ref{th:intro-gamma} for $\gamma \gg 1$. 
Indeed, if a higher harmonic serves as a small perturbation, then the unperturbed system synchronizes to a singleton on the fast time-scale and the perturbation will not be able to split the mass into two points again.
Thus, to construct an model exhibiting a multiscale synchronization we consider a cascade of $m$ harmonic noises of a decreasing order, namely the SDE of form:
\begin{equation}
\label{eq:multi}
\rmd\phi_t = \sum_{i=0}^{m-1}\gamma^i(\cos(k_i\phi_t)\partial B^{i, 1}_t + \sin(k_i\phi_t)\partial B^{i, 2}_t), \quad k_i > k_{i+1},
\end{equation}
where $B^{i, 1},B^{i, 2}$ are independent Brownian motions. We first note that $\phi$ is a rescaled Brownian motion on the circle.
\begin{proposition}[Multiharmonic model is a Brownian motion]
    Let $\phi_t$ be the multiharmonic model as in Eq. \eqref{eq:multi}, then $\phi_{t/w(\gamma, m)}$ is a Brownian motion, where $w(\gamma, m) = \sum_{i=0}^{m-1} \gamma^{2i}$.
\end{proposition}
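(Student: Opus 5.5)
The approach is to compute the infinitesimal generator of \eqref{eq:multi} and check that it equals $\frac{w(\gamma,m)}{2}\partial^2_{\phi\phi}$. This mirrors the proof of Proposition \ref{prop:harmonic}(i) and of Theorem \ref{th:main-distributional}. First, rewrite the Stratonovich SDE in Itô form. Write the diffusion coefficients as $\sigma_{i,1}(\phi)=\gamma^i\cos(k_i\phi)$ and $\sigma_{i,2}(\phi)=\gamma^i\sin(k_i\phi)$. The Itô correction is $\frac12\sum_{i,l}\sigma_{i,l}\sigma_{i,l}'$. Since
\[
\sigma_{i,1}\sigma_{i,1}'+\sigma_{i,2}\sigma_{i,2}' = \gamma^{2i}k_i\left(-\cos(k_i\phi)\sin(k_i\phi)+\sin(k_i\phi)\cos(k_i\phi)\right)=0,
\]
the correction vanishes term by term. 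The Itô drift is therefore zero.

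Second, compute the diffusion coefficient. The Brownian motions $B^{i,l}$ are mutually independent, so the quadratic variation of $\phi$ has density
\[
\sum_{i=0}^{m-1}\gamma^{2i}\left(\cos^2(k_i\phi)+\sin^2(k_i\phi)\right)=\sum_{i=0}^{m-1}\gamma^{2i}=w(\gamma,m).
\]
This density is constant in $\phi$. Hence the generator is $L=\frac{w(\gamma,m)}{2}\partial^2_{\phi\phi}$, in exact analogy with $L=\frac{k^2}{2}\partial^2_{\psi\psi}$ in Proposition \ref{prop:harmonic}. Note also that $\phi_t=\int_0^t\sum_{i,l}\sigma_{i,l}(\phi_s)\,\rmd B^{i,l}_s$ is a continuous local martingale with $[\phi]_t=w(\gamma,m)t$.

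Third, conclude by a time change. By Lévy's characterization, $\phi_{t/w(\gamma,m)}$ is a continuous local martingale with quadratic variation $t$, so it is a standard Brownian motion on $\bbR$. Its projection to $\bbR/2\pi\bbZ\cong\bbS^1$ is then a Brownian motion on the circle. Equivalently, one can argue on the level of laws as in Theorem \ref{th:main-distributional}: the rescaled process has generator $\frac12\partial^2_{\phi\phi}$, which is the Laplace–Beltrami operator on $\bbS^1$ up to the factor $\frac12$, and uniqueness of the heat-flow solution identifies the law.

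No step is a real obstacle, since the computation is elementary. The only point needing care is the Itô–Stratonovich conversion, where the cancellation happens within each harmonic separately. Because of this, the ordering $k_i>k_{i+1}$ and the values of $k_i$ play no role in this statement; they matter only for the attractor structure.
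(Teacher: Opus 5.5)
Your proof is correct and follows essentially the same route as the paper. The paper obtains the generator $\frac{w(\gamma,m)}{2}\Delta$ by adding, via independence of the $B^{i,l}$, the generators $\frac{\gamma^{2i}}{2}\Delta$ of the individual harmonic components from Proposition~\ref{prop:harmonic}, and your explicit It\^o--Stratonovich and quadratic-variation computation (with your L\'evy-characterization step on the lift to $\bbR$, applied to $\phi_{t/w(\gamma,m)}-\phi_0$) verifies exactly this and makes the final identification with a Brownian motion fully rigorous.
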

\begin{proof}
    As follows from Proposition \ref{prop:harmonic} and the independence of the driving processes $B^{i, 1},B^{i, 2}$, the generator of the multiharmonic process $\phi_t$ is $Lf = \frac{1}{2}\sum_{i=0}^{m-1} \gamma^{2i} \Delta$ and hence the result.
\end{proof}
We remark that the time scale appearing in the Fokker-Planck equation of the multiharmonic model does not depend on the harmonics $k_i$ but only on the number of components $m$ and the small parameter $\gamma$.  At the same time, the parameters $k_i$ define the structure of the attractive configuration. In particular, the model with $m$ harmonics admits $m-1$ 'meta'-attractors $A_i(\omega):\  i = 0\dots m-2$ and one global attractor $A_{m-1}(\omega)$ where the structure the $j$-th attractor depend on the harmonics $(k_i)_{i \leq j}$.

To show the existence of $m$ different random attractors and the corresponding $m$-scale behaviour we argue as follows. We define the first random attractor $A_0(\omega)$ as the attractor of $k_0$-harmonic model \eqref{eq:multi-k}. According to Proposition \ref{prop:harmonic}, it consists of exactly $k_0$ equidistant points and  is exponentially attractive with a rate independent on $\gamma$ as discussed above. By a coupling argument analogous to the proof of Theorem \ref{th:main-meta},  we also conclude that the divergence from $A_0(\omega)$ is linear in $\gamma$. 

To recover the second scale we define $A_1(\omega)$ as the random attractor of the truncated system driven by the first two harmonic noises:
\[
\rmd\phi^{1}_t = \sum_{i=0}^1\gamma^i(\cos(k_i\phi^1_t)\partial B^{i, 1}_t + \sin(k_i\phi^1_t)\partial B^{i, 2}_t).
\]
By Proposition \ref{prop:ExWeakPntAttr}, there exists a weak point attractor $A_1(\omega)$ of the truncated system bi-harmonic model $\phi^{1}_t$ and its structure only depends on the first two harmonic numbers $k_0$ and $k_1$. Iterating this construction and considering the next truncated model
\[
\rmd\phi^{2}_t = \sum_{i=0}^2\gamma^i(\cos(k_i\phi^2_t)\partial B^{i, 1}_t + \sin(k_i\phi^2_t)\partial B^{i, 2}_t).
\]
we obtain a cascade of random weak point attractors. 

We also expect that the explicit exponential rates of convergence to the random point attractors $A_i(\omega)$ are available when $k_{i} \mod k_{i+1} = 0$ for all $i <m$. In this case $A_i(\omega)$ consists of exactly $k_i$ equidistant points and, by analogy with the RQF, the Lyapunov function of the two-point process at the $i$-th time scale is the corresponding harmonic function $f(\xi^1, \xi^2) = (1- \cos(k_i(\xi^1 - \xi^2)))^p$. We conjecture that in this case an analog of Theorem \ref{th:intro-gamma} could be formulated iteratively for each truncation. 

At the same time, we expect the multiscale behavior to appear in a general multi-harmonic setting and we illustrate it on Figure \ref{fig:multi}. We show the dynamics of the multi-point motion of the system \eqref{eq:multi} with $m=3$ and $k = 7,3,1$. As expected, at three different scales the model concentrates around different number of clusters. It is important to note that the harmonics are not divisible and thus
the random attractor $A_1(\omega)$ is not an equidistant triplet. In this work we do not specify the structure 'meta'-attractors of a general multi-harmonic model. In addition, we remark that the transition between the 'meta'-attractors follows a non-trivial dynamics which is also outside of the scope of this article.

\bibliographystyle{alpha}
\bibliography{biblio}
\end{document}